\documentclass[a4paper,11pt]{article}
\usepackage{amssymb}
\usepackage{amsmath}
\usepackage{amsfonts}
\usepackage{amsthm, amsmath}
\usepackage{latexsym}
\usepackage[usenames]{color} 
\usepackage{bm}
\usepackage[normalem]{ulem}
\usepackage{cancel}
\usepackage{overpic}
\usepackage{graphicx}
\usepackage{natbib}
\usepackage{mathrsfs}
\usepackage{caption}
\usepackage{enumerate}
\usepackage{exscale}
\usepackage[latin9]{inputenc}

\def\Om{\Omega}
 \def\ua{\uparrow}
 \def\da{\downarrow}

 \def\wh{\widehat}
 \def\wt{\widetilde}

\def\bbar{\overline}
\def\bR{\mathbb{R}}

\def\Om{\Omega}\def\om{\omega}

\def\ignore#1{}

\def\bE{\mathbb E}
\def\cF{{\mathscr F}}

\def\cL{\mathscr L}

\def\cR{{\mathscr R}}

\def\cX{{\mathscr X}}

\def\cR{\mathscr R}

\def\bN{\mathbb N}

\def\bP{{\mathbb P}}
\def\bE{{\mathbb E}}

\def\<{\langle}
\def\>{\rangle}

\def\ua{\uparrow}
\def\da{\downarrow}

\def\var{\text{var\,}}

\newtheorem{theorem}{Theorem}[section]
\newtheorem{proposition}[theorem]{Proposition}
\newtheorem{lemma}[theorem]{Lemma}
\newtheorem{corollary}[theorem]{Corollary}

\theoremstyle{definition}
\newtheorem{definition}[theorem]{Definition}

\newtheorem{remark}[theorem]{Remark}

\def\wt{\widetilde}
\def\bbar{\overline}




\topmargin=-1.5cm
\textheight=23.5cm
\oddsidemargin=-0.6truecm
\evensidemargin=-0.6truecm
\textwidth=17cm
\normalsize

\begin{document}

\title{{\LARGE\bf  A state-constrained differential game \\ arising in optimal portfolio liquidation}}
\author{\normalsize Alexander
Schied\thanks{University of Mannheim, Department of Mathematics, A5, 6, 68131 Mannheim, Germany, {\tt schied@uni-mannheim.de}}{\setcounter{footnote}{6}}\and  Tao Zhang\thanks{University of Mannheim, Department of Mathematics, A5, 6, 68131 Mannheim, Germany\hfill\break
The authors acknowledge support by Deutsche Forschungsgemeinschaft through Research Grant SCHI 500/3-1}}

\date{\normalsize First version: December 27, 2013\\
This version: July 7, 2015}

\maketitle

\begin{abstract}
We consider $n$ risk-averse agents who compete for liquidity in an Almgren--Chriss market impact model. Mathematically, this situation can be described by a Nash equilibrium for a certain linear-quadratic differential game with state constraints. The state constraints enter the problem as terminal boundary conditions for finite and infinite time horizons. We prove existence and uniqueness of Nash equilibria and give  closed-form solutions in some special cases. We also analyze qualitative properties of the equilibrium strategies and provide corresponding financial interpretations.\end{abstract}
 
 \medskip


\medskip\noindent{\bf Keywords:} Optimal portfolio liquidation, optimal trade execution, illiquid markets,
differential game with state constraints 

\section{Introduction}

In this paper, we analyze a state-constrained differential game  that arises for risk-averse  agents  aiming to liquidate a given asset position by a given time $T>0$. Agents   face  both price impact and volatility risk.  For each  agent, there is hence a tradeoff between slow trading so as to reduce transaction costs from price impact and fast liquidation in view of volatility risk. Beginning with \citet{BertsimasLo} and \citet{AlmgrenChriss2}, a large numbers of papers have studied 
the corresponding single-agent optimization problems in various settings; see \citet{Lehalle} and \citet{GatheralSchiedSurvey} for recent overviews and more complete lists of references. The problem becomes even more interesting when considering not just one, but $n$ agents who are aware of each others initial positions, a situation that is not unlikely to occur in reality; see \citet{Carlinetal} and \citet{SchoenebornSchied}. Together with \citet{BrunnermeierPedersen}, these two papers were  among the first to consider a   game theoretic approach, but only study open-loop Nash equilibria for risk-neutral agents using deterministic strategies. \citet{Moallemietal} extend the analysis to a model with asymmetric information.  \citet{CarmonaYang} use numerical simulations to study a system of coupled HJB equations arising from a closed-loop Nash equilibrium for two utility-maximizing agents. \cite{Lachapelleetal} apply mean-field games to model  the price formation process in the presence of high-frequency traders. A two-player Nash equilibrium in a market impact model with exponentially decaying transient price impact is analyzed in \citet{SchiedZhangHotPotato}.

Here, we consider agents maximizing a mean-variance functional in a continuous-time \citet{AlmgrenChriss2} framework, which is  a   common  setup for portfolio liquidation. It leads to a linear-quadratic differential game, which has the interesting additional feature of a terminal state constraint arising from the  liquidation constraint on the portfolio. This state constraint leads to two-point boundary problems in place of the usual initial value problems connected with unconstrained differential games. Aside from the financial interpretation of our results, this paper thus also provides a natural case study for  a class  state-constrained differential games. 

Our main results establish existence and uniqueness  for the corresponding Nash equilibria with both finite and infinite time horizon. In  several cases, we can also give closed-form solutions for the equilibrium strategies. These formulas enable us to discuss some qualitative properties of the Nash equilibrium. Some of these properties are surprising, as they show that certain monotonicity properties that are discussed in the finance literature may break down under certain market conditions. See \citet{Maug} for discussions and for an empirical analysis of a large data set of portfolio liquidations of large investors.

The paper is organized as follows. In Section~\ref{Finite time background section},
 we recall some background material on portfolio liquidation in the Almgren--Chriss framework. Existence, uniqueness, and representation results for Nash equilibria with finite time horizon are stated in Section~\ref{Finite time Nash section}. Section~\ref{Finite time qualitative section} contains a discussion of the qualitative properties of the corresponding two-player Nash equilibrium.  Nash equilibria with infinite time horizon are discussed in Section~\ref{infinite-horizon section}. All proofs can be found in Section~\ref{Proofs Section}.

\section{Nash equilibrium with finite time horizon
}\label{Finite time section}
\subsection{Background}\label{Finite time background section}
We consider  a standard continuous-time  \citet{AlmgrenChriss2} framework for investors who are active over a fixed time period $[0,T]$. 
An investor may hold an initial position of $x$ shares and is required to close this position by time $T$.  
The information flow available to an investor is modeled by a filtration $(\cF_t)_{t\ge0}$ on a given probability space $(\Om,\cF,\bP)$. The trading strategy employed by the investor is denoted by $X=(X(t))_{t\in[0,T]}$. It needs to satisfy the following conditions of admissibility:
\\
$\bullet$ $X$ satisfies  the liquidation constraint $X(T)=0$;\\
$\bullet$  $X$ is adapted to the filtration  $(\cF_t)_{t\ge0}$;\\
$\bullet$  $X$ is absolutely continuous in the sense that there exists a progressively measurable process $(\dot X(t))_{t\in[0,T]}$ such that for all $\om\in\Om$, $\int_0^T(\dot X(t,\om))^2\,dt<\infty$ and 
$$X(t,\om)=X(0,\omega)+\int_0^t\dot X(s,\om)\,ds,\qquad t\in[0,T];
$$
$\bullet$  there exists a constant $c\ge0$ such that $|X(t,\om)|\le c$ for all $t$ and $\om$. \\
The class of all strategies that are admissible in this sense and satisfy $X(0)=x$ for given $x\in\bR$ will be denoted by $\cX(x,T)$. Let us also introduce the subclass $\cX_{\text{det}}(x,T)$ of all  strategies in $\cX(x,T)$ that are {deterministic} in the sense that they do not depend on $\om$.
The \lq unaffected price process\rq\ $S^0$ will describe the fluctuations of asset prices perceived by an investor who has no inside information on large trades carried out by other market participants during the time interval  $[0,T]$. In the Almgren--Chriss model, it is usually assumed that $S^0$  follows a Bachelier model. Here we are sometimes also going to allow for an extra drift to describe current price trends. Thus,
\begin{align*}
S^0(t)=S_0+\sigma W(t)+\int_0^tb(s)\,ds,
\end{align*}
where $S_0$ is a constant, $W$ is a standard Brownian motion, $\sigma\ge0$, and $b$ is deterministic and continuous. 

When an investor is using a strategy $X\in\cX(x,T)$, the strategy $X$ will influence the prices at which assets are traded. In the linear  Almgren--Chriss framework,  the resulting  price is assumed to be
\begin{align}\label{affected price process one player}
S^X(t):=S^0(t)+\gamma(X(t)-X(0))+\lambda\dot X(t),\qquad t\in[0,T],
\end{align} 
where the  constants $\gamma\ge0$ and $\lambda>0$ describe the  permanent and temporary price impact components.
At each time $t\in[0,T]$, the infinitesimal amount of $-\dot X(t)\,dt$ shares are sold at price $S^X(t)$. The total revenues generated by the strategy  $X\in\cX(x,T)$ are therefore given by
\begin{equation*}
\begin{split}
\cR(X)&:=-\int_0^T\dot X(t)S^X(t)\,dt.
\end{split}
\end{equation*}
 The optimal trade execution problem consists in maximizing a cost-risk functional of the revenues over all admissible strategies in $\cX(x,T)$. One possibility is the maximization of expected revenues,
\begin{align}\label{expected revenues eq}
\text{maximize}\quad \bE[\,\cR(X)\,],
\end{align}
as considered in many papers on optimal execution and, with the notable exception of \citet{CarmonaYang}, all other papers dealing with corresponding Nash equilibria. \citet{BertsimasLo} were  among the first to propose the problem \eqref{expected revenues eq}. In practice, it is common to account for the volatility risk arising from late execution by maximizing a mean-variance criterion:
\begin{align}\label{mean-variance criterion}
\text{maximize}\quad  \bE[\,\cR(X)\,]
-\frac\alpha2\,\var(\cR(X));
\end{align}
here $\alpha$ is a nonnegative risk-aversion parameter. 
When dealing  with the problem~\eqref{mean-variance criterion}, admissible strategies are usually restricted  to the class $\cX_{\text{det}}(x,T)$ of deterministic strategies; see \citet{AlmgrenChriss2} and \citet{Almgren}. Except for the results in \citet{LorenzAlmgren}, little is known when general adapted strategies are used in~\eqref{mean-variance criterion}; best of the authors' knowledge, not even the existence of maximizers has been established to date.
The main reason for this  is the lack of time consistency of the variance functional, which does not fit well into a dynamic optimization context.
 On the other hand,
 \cite{SchiedSchoenebornTehranchi} show that the maximization of~\eqref{mean-variance criterion}
 over deterministic strategies $X\in\cX_{\text{det}}(x,T)$ is equivalent to the maximization of the expected utility of revenues,
 \begin{align}\label{expected utility eq}
 \text{maximize}\quad \bE[\,u_\alpha(\cR(X))\,],
\end{align}
over all strategies in $\cX(x,T)$, when 
\begin{align}\label{ualpha}
u_\alpha(x):=\begin{cases}\frac1\alpha(1-e^{-\alpha x})&\text{if $\alpha>0$,}\\
x&\text{if $\alpha=0$,}
\end{cases}
\end{align}
is a CARA utility function with absolute risk aversion $\alpha\ge 0$. See \citet{Lehalle} and \citet{GatheralSchiedSurvey} for recent overviews on portfolio liquidation and related  market microstructure issues.

\subsection{Nash equilibrium}\label{Finite time Nash section}

Now suppose that $n$ investors are active in the market, using the respective strategies $X_1,\dots,X_n$. As in~\eqref{affected price process one player}, each strategy $X_i$ will impact the price process $S^0$, thus leading to the following price with aggregated price impact:
\begin{align}\label{affected price process n players eq}
S^{X_1,\dots,X_n}(t):=S^0(t)+\gamma\sum_{j=1}^n(X_j(t)-X_j(0))+\lambda\sum_{j=1}^n\dot X_j(t),\qquad t\in[0,T].
\end{align}
Let us denote by $\bm X_{-i}:=\{X_1,\dots, X_{i-1},X_{i+1},\dots, X_n\}$ the collection   of the strategies of all competitors of player $i$. Then, player $i$ will obtain the following revenues,
$$
\cR(X_i|\bm X_{-i})=-\int_0^T\dot X_i(t)S^{X_1,\dots,X_n}(t)\,dt,
$$
and seek to maximize one of the objective functionals~\eqref{expected revenues eq},~\eqref{mean-variance criterion}, or~\eqref{expected utility eq}. A natural question is whether there exists a Nash equilibrium in which all players maximizes their objective functionals given the strategies of their competitors. For the maximization of the expected revenues and vanishing drift,  this problem is solved in \citet{Carlinetal} within the class of deterministic strategies. It was later extended in \citet{SchoenebornSchied} to the case in which players have different time horizons and in \citet{Moallemietal} to a situation with asymmetric information. A system of coupled HJB equations arising from a closed-loop Nash equilibrium for two utility-maximizing agents is studied through numerical simulations by \citet{CarmonaYang}. Here, we will  conduct a mathematical analysis of $n$-player open-loop Nash equilibria for  mean-variance optimization~\eqref{mean-variance criterion} and CARA utility maximization~\eqref{expected utility eq}. 

\medskip

\begin{definition}\label{Nash eq def}Suppose that $n\in\bN$, $x_1,\dots,x_n\in\bR$ are initial asset positions,  and  $\alpha_1,\dots,\alpha_n$ are nonnegative  coefficients of risk aversion. \\
(a) A \emph{Nash equilibrium for mean-variance optimization} consists of a collection $X_1^*,\dots, X_n^*$ of deterministic strategies such that, for each $i$  and  ${\bm X}^*_{-i}=\{X^*_1,\dots, X^*_{i-1},X^*_{i+1},\dots, X^*_n\}$, the strategy $X^*_i\in \cX_{\text{det}}(x_i,T)$ maximizes the mean-variance functional 
$$ \bE[\,\cR(X|{\bm X}^*_{-i})\,]
-\frac{\alpha_i}2\,\var(\cR(X|{\bm X}^*_{-i}))
$$
over all  $X\in \cX_{\text{det}}(x_i,T)$.\\
(b) A \emph{Nash equilibrium for CARA utility maximization} consists of a collection $X_1^*,\dots, X_n^*$ of admissible strategies such that, for each $i$,   the strategy $X^*_i\in \cX(x_i,T)$ maximizes the expected utility 
$$ \bE[\,u_{\alpha_i}(\cR(X|{\bm X}^*_{-i}))\,]
$$
over all  $X\in \cX(x_i,T)$.
\end{definition}


Note that the equilibrium strategies $X^*_i$ for CARA utility maximization are allowed to be adapted, whereas, for reasons explained above, only deterministic strategies are admitted in mean-variance optimization. 
We start by formulating a general  existence and uniqueness result for the Nash equilibrium for mean-variance optimization.


\begin{theorem}\label{Nash Eq exist thm}For given $n\in\bN$, $\alpha_1,\dots,\alpha_n\ge0$, and $x_1,\dots,x_n$, there exists a unique Nash equilibrium $X_1^*,\dots, X_n^*$ for mean-variance optimization. It is given as the unique solution of the following second-order system of differential equations 
\begin{equation}\label{ode system 1 thm}
 {\alpha_i}\sigma^2X_i(t)
-2\lambda\ddot X_i(t)=b(t)+\gamma\sum_{j\neq i}\dot X_j(t)+\lambda \sum_{j\neq i}\ddot X_j(t)\end{equation}
with two-point boundary conditions 
\begin{align}\label{ode system 1 boundary conditions thm}
\text{$X_i(0)=x_i$ and $X_i(T)=0$}
\end{align}
 for $i=1,2,\dots,n$. \end{theorem}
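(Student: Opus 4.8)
The plan is to reduce the search for a Nash equilibrium to the two-point boundary value problem \eqref{ode system 1 thm}--\eqref{ode system 1 boundary conditions thm} and then to solve the latter by a variational energy argument.

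First I would make the objective functional explicit. Fix a player $i$ and deterministic competitor strategies $\bm X_{-i}$. Since the only stochastic ingredient is the Brownian motion $W$ and all strategies are deterministic, an integration by parts together with It\^o's isometry yields $\var(\cR(X|\bm X_{-i}))=\sigma^2\int_0^T X(t)^2\,dt$, using $X(T)=0$. Expanding $\bE[\cR(X|\bm X_{-i})]$ and discarding the terms that are constant on $\cX_{\text{det}}(x_i,T)$ (in particular the permanent-impact contribution $-\frac\gamma2 x_i^2$ of player $i$'s own trading, which telescopes), the mean-variance functional takes the form
\begin{align*}
J_i(X)=-\int_0^T\dot X(t)\Big[A(t)+\gamma\sum_{j\neq i}X_j(t)+\lambda\sum_{j\neq i}\dot X_j(t)\Big]dt-\lambda\int_0^T\dot X(t)^2\,dt-\frac{\alpha_i\sigma^2}2\int_0^T X(t)^2\,dt+\text{const},
\end{align*}
where $A(t)=S_0+\int_0^tb(s)\,ds$. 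The decisive structural observation is that the competitor strategies enter only linearly, while the quadratic part $-\lambda\int_0^T\dot X^2-\frac{\alpha_i\sigma^2}2\int_0^T X^2$ is strictly concave on the affine space $\cX_{\text{det}}(x_i,T)$: for any nonzero variation $h$ with $h(0)=h(T)=0$ one has $-\lambda\int_0^T\dot h^2-\frac{\alpha_i\sigma^2}2\int_0^T h^2\le-\lambda\int_0^T\dot h^2<0$ since $\lambda>0$. Hence $J_i$ admits at most one maximizer, and a strategy is the (necessarily unique) best response to $\bm X_{-i}$ if and only if it satisfies the first-order condition. Computing the first variation and integrating by parts gives exactly the Euler--Lagrange equation \eqref{ode system 1 thm}, and the $C^2$-regularity of its solutions guarantees they lie in $\cX_{\text{det}}(x_i,T)$. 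Consequently $(X_1^*,\dots,X_n^*)$ is a Nash equilibrium if and only if it solves \eqref{ode system 1 thm}--\eqref{ode system 1 boundary conditions thm}, so existence and uniqueness of the equilibrium is equivalent to existence and uniqueness of the solution of this boundary value problem.

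For uniqueness, let $\bm X$ and $\bm Y$ be two solutions and set $Z_i=X_i-Y_i$, so that the $Z_i$ solve the homogeneous system (with $b\equiv0$) under $Z_i(0)=Z_i(T)=0$. The key step is an energy estimate: I would multiply the $i$-th homogeneous equation by $Z_i$, integrate over $[0,T]$, integrate by parts (all boundary terms vanish), and sum over $i$. Writing $\Sigma=\sum_k Z_k$ and using $\Sigma(0)=\Sigma(T)=0$, the permanent-impact contributions collapse because $\sum_{i\neq j}\int_0^T Z_i\dot Z_j=\int_0^T\Sigma\dot\Sigma-\sum_i\int_0^T Z_i\dot Z_i=0$, while the temporary-impact cross terms recombine via $\sum_{i\neq j}\dot Z_i\dot Z_j=\dot\Sigma^2-\sum_i\dot Z_i^2$. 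After cancellation the identity reduces to
\begin{align*}
\sigma^2\sum_{i=1}^n\alpha_i\int_0^T Z_i^2\,dt+\lambda\sum_{i=1}^n\int_0^T\dot Z_i^2\,dt+\lambda\int_0^T\dot\Sigma^2\,dt=0.
\end{align*}
Every term is nonnegative because $\alpha_i\ge0$ and $\lambda>0$; hence each vanishes, forcing $\dot Z_i\equiv0$, and together with $Z_i(0)=0$ this gives $\bm Z\equiv0$. Here the factor $2\lambda$ multiplying $\ddot X_i$ (versus $\lambda$ for the competitors) is precisely what makes the surviving coefficient of $\sum_i\int\dot Z_i^2$ equal to the positive number $\lambda$, so this sign structure is essential.

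Existence then follows from uniqueness by a dimension count. The system \eqref{ode system 1 thm} can be solved for $(\ddot X_1,\dots,\ddot X_n)$ since its coefficient matrix $-\lambda(I_n+\mathbf 1\mathbf 1^\top)$, with $\mathbf 1=(1,\dots,1)^\top$, has eigenvalues $-\lambda$ and $-\lambda(n+1)$ and is therefore invertible; the problem is thus equivalent to a first-order linear system on $\bR^{2n}$ with constant coefficients and an inhomogeneity driven by $b$. Fixing $\bm X(0)=\bm x$ and parametrizing solutions by $\dot{\bm X}(0)\in\bR^n$, the terminal value $\bm X(T)$ depends affinely on $\dot{\bm X}(0)$, and the uniqueness argument shows exactly that the linear part of this map $\bR^n\to\bR^n$ is injective, hence bijective. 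Thus there is a unique $\dot{\bm X}(0)$ producing $\bm X(T)=0$, which yields the unique solution of the boundary value problem and completes the proof. The main obstacle is the uniqueness step: it is not a priori clear that the coupled system is well-posed, and the decisive point is the hidden positivity uncovered by the energy identity, whose sign depends delicately on the model coefficients.
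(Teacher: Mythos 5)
Your argument follows essentially the same route as the paper: strict concavity of the quadratic part of each player's objective, the Euler--Lagrange characterization of best responses, the positive-definite quadratic form $\sigma^2\sum_i\alpha_i\int_0^T Z_i^2\,dt+\lambda\sum_i\int_0^T\dot Z_i^2\,dt+\lambda\int_0^T\dot\Sigma^2\,dt$ (which is exactly the quantity the paper computes as the one-sided derivative $f'(0+)$ in Lemma~\ref{Equilibrium uniqueness lemma}), and the reduction of existence to the triviality of the homogeneous two-point boundary value problem; your shooting/dimension count is a self-contained substitute for the paper's citation of the general theory of linear boundary value problems. All of these computations check out.

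The one genuine gap is the claim that a profile is a Nash equilibrium \emph{if and only if} it solves the classical system \eqref{ode system 1 thm}--\eqref{ode system 1 boundary conditions thm}. The ``if'' direction is fine (concavity makes the Euler--Lagrange equation sufficient) and yields existence, but the ``only if'' direction---and hence uniqueness of the equilibrium---is not established: by Definition~\ref{Nash eq def}, an equilibrium strategy is a priori only absolutely continuous with square-integrable derivative, so the second-order system \eqref{ode system 1 thm} cannot be written for it, and your energy identity, as stated, only excludes two distinct $C^2$ solutions of the boundary value problem. It leaves open a second, non-smooth equilibrium. To close this you must either (i) prove a regularity bootstrap---from the weak first-order condition, $2\lambda\dot X_i+\lambda\sum_{j\neq i}\dot X_j$ differs from an absolutely continuous function by a constant for each $i$; summing over $i$ shows that $\dot\Sigma$, and then each $\dot X_i$, is continuous, whence $X_i\in C^2$---or (ii) run your energy computation in weak form directly on two arbitrary admissible equilibria, which is precisely what the paper's Lemma~\ref{Equilibrium uniqueness lemma} does via the first variation of $\beta\mapsto\beta X^1+(1-\beta)X^0$. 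Note also that the paper's logic is reversed relative to yours at this point: it proves uniqueness of equilibria first, among all admissible strategies, and then uses that to conclude that the homogeneous boundary value problem has only the trivial solution, whereas you apply the energy identity directly to the ODE system; either order works once the regularity issue is addressed.
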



It will become clear from~\eqref{mean-variance = action functional eq} and~\eqref{Lagrangian} below that, from a mathematical point of view, the Nash equilibrium constructed above corresponds to an open-loop linear-quadratic differential game with state constraints. The state constraints are provided by the liquidation constraints $X_i(T)=0$, $i=1,\dots, n$. They    are responsible for the fact that 
 we cannot apply standard results on the existence and uniqueness of open-loop linear-quadratic differential games, and significantly complicate the proof for the existence of Nash equilibria, especially in the case of an infinite time horizon as studied in Section~\ref{infinite-horizon section}. It may also be of interest  that the proof of the existence of solutions to~\eqref{ode system 1 thm},~\eqref{ode system 1 boundary conditions thm} rests on  the uniqueness of Nash equilibria, which will be established in Lemma~\ref{Equilibrium uniqueness lemma} below.

Our next result states that the unique Nash equilibrium for mean-variance optimization is also a Nash equilibrium for CARA utility maximization. It is an open question, however, whether there may be more than one Nash equilibrium for CARA utility maximization.


\begin{corollary}\label{CARA Nash Eq exist cor}For given $n\in\bN$, $\alpha_1,\dots,\alpha_n\ge0$, and $x_1,\dots,x_n$, the Nash equilibrium for mean-variance optimization constructed in Theorem~\ref{Nash Eq exist thm} is also a Nash equilibrium for CARA utility maximization.
\end{corollary}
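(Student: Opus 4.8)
The plan is to fix a player $i$, freeze the competitors at their (deterministic) equilibrium strategies $\bm X^*_{-i}$, and show that player $i$'s residual problem is a single-agent execution problem to which the equivalence result of \citet{SchiedSchoenebornTehranchi} applies. First I would absorb the competitors' influence into a modified unaffected price process. When player $i$ deviates to an arbitrary $X\in\cX(x_i,T)$ while the others use $\bm X^*_{-i}$, the aggregated affected price~\eqref{affected price process n players eq} can be written as
\begin{equation*}
S^{X,\bm X^*_{-i}}(t)=\wt S^0(t)+\gamma(X(t)-X(0))+\lambda\dot X(t),
\end{equation*}
where $\wt S^0(t):=S^0(t)+\gamma\sum_{j\neq i}(X^*_j(t)-X^*_j(0))+\lambda\sum_{j\neq i}\dot X^*_j(t)$. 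Since the equilibrium strategies are twice differentiable solutions of~\eqref{ode system 1 thm}, the correction term is a deterministic, continuously differentiable function of $t$, so $\wt S^0$ is again a Bachelier model with the same volatility $\sigma$ but a modified deterministic drift $\wt b=b+\gamma\sum_{j\neq i}\dot X^*_j+\lambda\sum_{j\neq i}\ddot X^*_j$ and a shifted initial value. Consequently $\cR(X|\bm X^*_{-i})$ coincides exactly with the single-agent revenue of~\eqref{affected price process one player} computed against $\wt S^0$.

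With this identification, player $i$'s CARA objective $\bE[\,u_{\alpha_i}(\cR(X|\bm X^*_{-i}))\,]$ becomes the single-agent expected-utility functional~\eqref{expected utility eq} for the unaffected price $\wt S^0$, while the mean-variance best-response functional of Definition~\ref{Nash eq def}(a) becomes the corresponding single-agent functional~\eqref{mean-variance criterion}. I would then invoke \citet{SchiedSchoenebornTehranchi}: for this single-agent problem, the maximizer of~\eqref{expected utility eq} over all adapted strategies in $\cX(x_i,T)$ is the deterministic maximizer of~\eqref{mean-variance criterion} over $\cX_{\text{det}}(x_i,T)$. But by Definition~\ref{Nash eq def}(a) and Theorem~\ref{Nash Eq exist thm}, the latter maximizer is precisely $X^*_i$. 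Hence $X^*_i$ also maximizes the CARA utility over all adapted strategies against $\bm X^*_{-i}$; since this holds for every $i$, the collection $X^*_1,\dots,X^*_n$ is a Nash equilibrium for CARA utility maximization in the sense of Definition~\ref{Nash eq def}(b).

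The main obstacle is making the invocation of \citet{SchiedSchoenebornTehranchi} airtight, because the reduced unaffected price $\wt S^0$ carries the extra deterministic drift $\wt b$ rather than being driftless. I would check that their equivalence---whose nontrivial content is that randomizing the strategy cannot improve the exponential-utility value beyond what the best deterministic strategy achieves---goes through verbatim in the presence of a continuous deterministic drift: for deterministic $X$ the revenue is Gaussian, so the exponential-utility value is a strictly increasing function of the mean-variance functional, whereas the argument that adapted strategies offer no advantage rests only on the Bachelier (Gaussian, independent-increment) structure of $\wt S^0$, which the drift does not disturb. The boundary cases $\sigma=0$ and $\alpha_i=0$, in which both objectives collapse to expected revenue, should be recorded separately but are immediate.
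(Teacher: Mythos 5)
Your proposal is correct and follows essentially the same route as the paper: reduce to a single-agent problem by treating $S^{\bm X^*_{-i}}$ as a modified Bachelier unaffected price with deterministic drift, invoke the equivalence of \citet{SchiedSchoenebornTehranchi} (which the paper justifies exactly as you anticipate, via the independent increments and finite exponential moments of the drift-augmented process), and use the Gaussian form of $\cR(X|\bm X^*_{-i})$ for deterministic $X$ to identify CARA maximization with mean-variance maximization. The paper likewise disposes of $\alpha_i=0$ as immediate.
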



Let $(\wt\cF_t)_{t\ge0}$ be any sub-filtration of $(\cF_t)_{t\ge0}$. It will follow from the proof of Corollary~\ref{CARA Nash Eq exist cor} that the Nash equilibrium for mean-variance optimization constructed in Theorem~\ref{Nash Eq exist thm} is also a Nash equilibrium for CARA utility maximization within the class of  all strategies that are adapted to $(\wt\cF_t)_{t\ge0}$. In particular, it is a Nash equilibrium  for CARA utility maximization within the class of  deterministic strategies.

Let us now have a closer look at 
the system~\eqref{ode system 1 thm}. It
 simplifies  when all agents have the same risk aversion.


 \begin{corollary}\label{drift cor}In the setting of Theorem~\ref{Nash Eq exist thm}, suppose that $\alpha_1=\cdots=\alpha_n=\alpha\ge0$. Then 
$$
 \Sigma(t):=\sum_{i=1}^nX^*_i(t)
$$
is the  unique solution  of the following one-dimensional two-point boundary value problem,
 \begin{align}\label{Sigma b bvp}
 \alpha\sigma^2\Sigma(t)-(n-1)\gamma\dot\Sigma(t)-(n+1)\lambda\ddot\Sigma(t)=nb(t),\qquad \Sigma(0)=\sum_{i=1}^nx_i,\ \Sigma(T)=0.
\end{align}
Given  $\Sigma$, each equilibrium strategy $X^*_i$ is  equal to  the unique solution of the following one-dimensional two-point boundary value problem,
\begin{align}\label{Xi b bvp}
\alpha\sigma^2X_i(t)+\gamma\dot X_i(t)-\lambda\ddot X_i(t)=b(t)+\gamma\dot\Sigma(t)+\lambda\ddot\Sigma(t),\qquad X_i(0)=x_i,\ X_i(T)=0.
\end{align}
\end{corollary}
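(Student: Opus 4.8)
The plan is to reduce the corollary to elementary manipulations of the characterizing system from Theorem~\ref{Nash Eq exist thm}, together with a uniqueness statement for scalar linear two-point boundary value problems. Since the unique Nash equilibrium $X_1^*,\dots,X_n^*$ is already known to solve \eqref{ode system 1 thm}--\eqref{ode system 1 boundary conditions thm}, I would obtain the two displayed boundary value problems by, respectively, summing the system over $i$ and eliminating the coupling term in favor of $\Sigma$. The only genuine work is to argue that each resulting scalar problem has a unique solution, so that $\Sigma$ and the $X_i^*$ are indeed \emph{characterized} as the solutions, rather than merely solving them.

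First I would sum equation \eqref{ode system 1 thm} over $i=1,\dots,n$ with $\alpha_i=\alpha$. On the left this gives $\alpha\sigma^2\Sigma-2\lambda\ddot\Sigma$. On the right, $\sum_i b(t)=nb(t)$, and the key combinatorial observation is that each $\dot X_j$ occurs in exactly $n-1$ of the inner sums $\sum_{j\neq i}$, so that $\sum_i\sum_{j\neq i}\dot X_j=(n-1)\dot\Sigma$ and likewise $\sum_i\sum_{j\neq i}\ddot X_j=(n-1)\ddot\Sigma$. Collecting the second-derivative terms via $-2\lambda-(n-1)\lambda=-(n+1)\lambda$ yields exactly \eqref{Sigma b bvp}; the boundary conditions follow from $\Sigma(0)=\sum_ix_i$ and $\Sigma(T)=\sum_iX_i^*(T)=0$. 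For the individual strategies I would substitute $\sum_{j\neq i}\dot X_j^*=\dot\Sigma-\dot X_i^*$ and $\sum_{j\neq i}\ddot X_j^*=\ddot\Sigma-\ddot X_i^*$ into \eqref{ode system 1 thm}. Moving the resulting $-\gamma\dot X_i^*-\lambda\ddot X_i^*$ to the left and combining $-2\lambda\ddot X_i^*+\lambda\ddot X_i^*=-\lambda\ddot X_i^*$ produces precisely \eqref{Xi b bvp}, with its boundary conditions inherited directly from \eqref{ode system 1 boundary conditions thm}.

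The main, and essentially only, obstacle is uniqueness for the two scalar problems, i.e.\ showing that the corresponding homogeneous problems admit only the trivial solution. Both equations are linear with constant coefficients; the homogeneous characteristic polynomials are $(n+1)\lambda r^2+(n-1)\gamma r-\alpha\sigma^2$ for \eqref{Sigma b bvp} and $\lambda r^2-\gamma r-\alpha\sigma^2$ for \eqref{Xi b bvp}. When $\alpha\sigma^2>0$ the constant term is strictly negative, so the two real roots have opposite signs and in particular are distinct; the conditions $Y(0)=Y(T)=0$ then yield a $2\times 2$ linear system for the coefficients of $c_1e^{r_1t}+c_2e^{r_2t}$ whose determinant $e^{r_2T}-e^{r_1T}$ is nonzero since $r_1\neq r_2$ and $T>0$, forcing $c_1=c_2=0$. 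When $\alpha\sigma^2=0$ one root is $0$; the other is a nonzero real number if $\gamma>0$ (so the roots remain distinct), while for $\gamma=0$ the equation degenerates to $\ddot Y=0$ with a double root at $0$ and solution space $\{a+bt\}$. In every case the two boundary conditions annihilate the at most two-dimensional homogeneous solution space, so each scalar two-point problem has a unique solution, which identifies $\Sigma$ and the $X_i^*$ as claimed.

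As a consistency remark that I would not strictly need but that confirms the structure, summing \eqref{Xi b bvp} over $i$ shows that $\sum_i X_i^*$ satisfies the same scalar problem \eqref{Sigma b bvp} as $\Sigma$, so the decoupled system is self-consistent: solving \eqref{Sigma b bvp} first and then the $n$ problems \eqref{Xi b bvp} recovers precisely the equilibrium of Theorem~\ref{Nash Eq exist thm}.
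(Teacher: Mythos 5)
Your proposal is correct and follows essentially the same route as the paper's proof: rewriting the coupled system \eqref{ode system 1 thm} in terms of $\Sigma$ to obtain \eqref{Xi b bvp} and summing over $i$ to obtain \eqref{Sigma b bvp}. The only difference is that you additionally spell out the uniqueness of the two scalar constant-coefficient boundary value problems via the sign pattern of the characteristic roots, a point the paper leaves implicit; this is a harmless (indeed welcome) addition rather than a different approach.
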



It is possible to obtain closed-form solutions of~\eqref{Sigma b bvp} and~\eqref{Xi b bvp}, but the corresponding expressions are quite involved. The situation simplifies when  the drift $b$ vanishes identically. 


\begin{theorem}\label{alpha n=alpha b=0 thm}
In the setting of Corollary~\ref{drift cor}, assume that, in addition, $b=0$ and  $\alpha>0$. For
\begin{align}\label{wh rho-}
\wh\theta=\frac{\sqrt{\gamma^2+4\alpha\sigma^2\lambda}}{2\lambda}\qquad\text{and}\qquad
\wh\rho=\frac{\sqrt{(n-1)^2\gamma^2+4(n+1)\alpha\sigma^2\lambda}}{2(n+1)\lambda},
\end{align}
we define
\begin{align}\label{thetarho+-}
\theta_\pm=\frac{\gamma}{2\lambda}\pm\wh\theta\quad\text{and}\quad\rho_\pm=-\frac{(n-1)\gamma}{2(n+1)\lambda}\pm\wh\rho.
\end{align}
 Then, the $i^{\text{th}}$ equilibrium strategy $X^*_i$  is of the form
 \begin{align}\label{linear comb rep}
 X^*_i(t)=c_i(\theta_+)e^{\theta_+t}+c_i(\theta_-)e^{\theta_-t}+c(\rho_+)e^{\rho_+t}+c(\rho_-)e^{\rho_-t},
\end{align}
where, for $\bbar x_n:=\frac1n\sum_{j=1}^nx_j$,
\begin{align}\label{ci(rho pm)}
c_i(\theta_+)= \frac{\bbar x_n-x_i}{e^{2\wh \theta T}-1},\quad c_i(\theta_-)= \frac{-(\bbar x_n-x_i)}{1-e^{-2\wh \theta T}},\quad
c(\rho_+)=\frac{-\bbar x_n}{e^{2\wh\rho T}-1},\quad c(\rho_-)=\frac{\bbar x_n}{1-e^{-2\wh\rho T}}.\end{align}
Moreover,  the solution $ \Sigma(t)=\sum_{i=1}^nX^*_i(t)$ of the two-point boundary value problem~\eqref{Sigma b bvp} is given by
\begin{align}\label{Sigma general n eq}
\Sigma(t)=\frac{n\bbar x_n}{2\sinh(\wh\rho T)}\Big(e^{\wh\rho T}e^{\rho_-t}-e^{-\wh \rho T}e^{\rho_+t}\Big).
\end{align} 

\end{theorem}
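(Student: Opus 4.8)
The plan is to build on Corollary~\ref{drift cor}, which already guarantees that $\Sigma=\sum_i X_i^*$ is the unique solution of~\eqref{Sigma b bvp} and that each $X_i^*$ is the unique solution of~\eqref{Xi b bvp}. With $b\equiv0$ both problems become homogeneous constant-coefficient linear second-order ODEs, so the entire proof reduces to diagonalizing two scalar boundary value problems and matching constants. I will assume throughout that $\wh\theta>0$ and $\wh\rho>0$ (guaranteed once $\sigma>0$), so that the relevant exponents are real and distinct.

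First I would solve~\eqref{Sigma b bvp}. Inserting $e^{\rho t}$ into the homogeneous equation $\alpha\sigma^2\Sigma-(n-1)\gamma\dot\Sigma-(n+1)\lambda\ddot\Sigma=0$ gives the characteristic equation $(n+1)\lambda\rho^2+(n-1)\gamma\rho-\alpha\sigma^2=0$, whose two roots are exactly the $\rho_\pm$ of~\eqref{thetarho+-}. Writing $\Sigma(t)=Ae^{\rho_+t}+Be^{\rho_-t}$ and imposing the boundary data $\Sigma(0)=n\bbar x_n$ and $\Sigma(T)=0$ yields a $2\times2$ linear system for $A,B$; using $\rho_+-\rho_-=2\wh\rho$ to simplify, its determinant collapses to a multiple of $\sinh(\wh\rho T)$ and one recovers the closed form~\eqref{Sigma general n eq}. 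Reading off the coefficients of $e^{\rho_\pm t}$ in $\Sigma/n$ then produces $c(\rho_\pm)$ as in~\eqref{ci(rho pm)}.

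The conceptual heart of the argument is the observation that $\Sigma/n$ is itself a particular solution of the $X_i$-equation~\eqref{Xi b bvp}. Indeed, inserting $\Sigma/n$ into the left-hand side of~\eqref{Xi b bvp} and multiplying by $n$ turns the required identity $\alpha\sigma^2\Sigma+\gamma\dot\Sigma-\lambda\ddot\Sigma=n(\gamma\dot\Sigma+\lambda\ddot\Sigma)$ into precisely the $\Sigma$-equation~\eqref{Sigma b bvp} with $b=0$, which $\Sigma$ satisfies by construction; equivalently, summing~\eqref{Xi b bvp} over $i$ reproduces~\eqref{Sigma b bvp}. Since the characteristic equation of the homogeneous part $\alpha\sigma^2X_i+\gamma\dot X_i-\lambda\ddot X_i=0$ is $\lambda\theta^2-\gamma\theta-\alpha\sigma^2=0$ with roots $\theta_\pm$, the general solution of~\eqref{Xi b bvp} is $X_i^*(t)=c_i(\theta_+)e^{\theta_+t}+c_i(\theta_-)e^{\theta_-t}+\Sigma(t)/n$, which is exactly the form~\eqref{linear comb rep}. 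One checks that no resonance arises: if a $\theta$-root coincided with a $\rho$-root, the corresponding forcing term in~\eqref{Xi b bvp} would vanish automatically.

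It remains to pin down $c_i(\theta_\pm)$ from the two-point data. Because $\Sigma(0)/n=\bbar x_n$ and $\Sigma(T)/n=0$, the boundary conditions $X_i^*(0)=x_i$ and $X_i^*(T)=0$ reduce to the decoupled system $c_i(\theta_+)+c_i(\theta_-)=x_i-\bbar x_n$ together with $c_i(\theta_+)e^{\theta_+T}+c_i(\theta_-)e^{\theta_-T}=0$. Dividing the second relation by $e^{\gamma T/(2\lambda)}$ and using $\theta_\pm=\tfrac{\gamma}{2\lambda}\pm\wh\theta$ gives $c_i(\theta_-)=-c_i(\theta_+)e^{2\wh\theta T}$, and back-substitution yields the stated $c_i(\theta_\pm)$ of~\eqref{ci(rho pm)} after a routine hyperbolic simplification. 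The main obstacle here is bookkeeping rather than conceptual: keeping the signs in the two characteristic equations straight and carrying out the $\sinh$ and $e^{\pm\wh\rho T}$ manipulations that turn the raw constants from the linear systems into the symmetric forms displayed in~\eqref{ci(rho pm)} and~\eqref{Sigma general n eq}. One should also note that the degenerate cases $\wh\theta=0$ or $\wh\rho=0$, which can only occur when $\sigma=0$, are excluded or handled by a limiting argument.
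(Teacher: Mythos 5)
Your proof is correct, but it takes a genuinely different route from the paper. The paper proves Theorem~\ref{alpha n=alpha b=0 thm} by passing to the first-order system $\dot{\bm Z}=M\bm Z$ of Remark~\ref{matrix rm} and invoking Lemma~\ref{matrix M lemma}, which diagonalizes the $2n\times2n$ matrix $M$: the eigenvalues are exactly $\theta_\pm,\rho_\pm$, with $\rho_\pm$ attached to the direction $\bm1$ and $\theta_\pm$ to $\bm1^\perp$, so every solution is a linear combination of the four exponentials and one only has to check the boundary conditions. You instead stay entirely at the level of the two scalar boundary value problems~\eqref{Sigma b bvp} and~\eqref{Xi b bvp} supplied by Corollary~\ref{drift cor}: the characteristic equation of~\eqref{Sigma b bvp} gives $\rho_\pm$, the homogeneous part of~\eqref{Xi b bvp} gives $\theta_\pm$, and the key observation that $\Sigma/n$ is an exact particular solution of~\eqref{Xi b bvp} (equivalent to~\eqref{Sigma b bvp} after multiplying by $n$) yields~\eqref{linear comb rep} with $c(\rho_\pm)$ read off from $\Sigma/n$. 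These are really the same decomposition in two guises --- the $\bm1$ eigendirection of $M$ \emph{is} $\Sigma$, and $\bm1^\perp$ carries the deviations $X_i-\bbar x_n$ --- but your version is more elementary and makes the coefficient matching transparent, whereas the paper's matrix lemma does double duty later in the infinite-horizon proof of Theorem~\ref{Nash eq exist thm infinite horizon}, which is presumably why the authors set it up that way. Two small remarks: your parenthetical about resonance is slightly misstated --- the roots \emph{can} coincide (e.g.\ $\theta_-=\rho_-=-\gamma/\lambda$ exactly when $\alpha\sigma^2\lambda=2\gamma^2$, the threshold of Corollary~\ref{predatory cor}), but this is harmless precisely because $\Sigma/n$ is an exact particular solution, so no resonant ansatz is ever needed; and the degeneracy $\wh\theta=0$ or $\wh\rho=0$ you flag at the end is equally unaddressed in the paper's own proof, so it is not a gap relative to it.
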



The formulas in  Theorem~\ref{alpha n=alpha b=0 thm} can be further simplified in a two-player setting:


\begin{corollary}\label{n=2 Nash eq cor}In the setting of Theorem~\ref{alpha n=alpha b=0 thm}, assume in addition that $n=2$. Then
\begin{align}\label{X1*X2*}
X^*_1(t)=\frac12\big(\Sigma(t)+\Delta(t)\big)\qquad\text{and}\qquad X^*_2(t)=\frac12\big(\Sigma(t)-\Delta(t)\big),
\end{align}
where 
\begin{align}\label{Sigma for n=2}
\Sigma(t)&=(x_1+x_2)e^{-\frac{\gamma t}{6\lambda}}\frac{\sinh\Big(\frac{(T-t)\sqrt{\gamma^2+12\alpha\lambda\sigma^2}}{6\lambda}\Big)}{\sinh\Big(\frac{T\sqrt{\gamma^2+12\alpha\lambda\sigma^2}}{6\lambda}\Big)},\\
\Delta(t)&=(x_1-x_2)e^{\frac{\gamma t}{2\lambda}}\frac{\sinh\Big(\frac{(T-t)\sqrt{\gamma^2+4\alpha\lambda\sigma^2}}{2\lambda}\Big)}{\sinh\Big(\frac{T\sqrt{\gamma^2+4\alpha\lambda\sigma^2}}{2\lambda}\Big)}.\label{Delta for n=2}
\end{align}
\end{corollary}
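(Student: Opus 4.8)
The plan is to specialize the closed-form representation from Theorem~\ref{alpha n=alpha b=0 thm} to the case $n=2$ and then to reorganize the resulting exponentials into hyperbolic sines. With $n=2$ we have $\bbar x_n=\frac12(x_1+x_2)$, $\wh\rho=\frac{\sqrt{\gamma^2+12\alpha\sigma^2\lambda}}{6\lambda}$, and $\rho_\pm=-\frac{\gamma}{6\lambda}\pm\wh\rho$, while $\wh\theta$ and $\theta_\pm=\frac{\gamma}{2\lambda}\pm\wh\theta$ are as before. Since the claim~\eqref{X1*X2*} amounts to inverting $X^*_1=\frac12(\Sigma+\Delta)$, $X^*_2=\frac12(\Sigma-\Delta)$ with $\Sigma=X^*_1+X^*_2$ and $\Delta:=X^*_1-X^*_2$, it suffices to derive the stated formulas for $\Sigma$ and $\Delta$ separately.

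For $\Sigma$, I would substitute $n=2$ directly into~\eqref{Sigma general n eq}. Factoring the common prefactor $e^{-\gamma t/(6\lambda)}$ out of $e^{\rho_\pm t}=e^{-\gamma t/(6\lambda)}e^{\pm\wh\rho t}$, the bracket $e^{\wh\rho T}e^{\rho_-t}-e^{-\wh\rho T}e^{\rho_+t}$ collapses to $e^{-\gamma t/(6\lambda)}\bigl(e^{\wh\rho(T-t)}-e^{-\wh\rho(T-t)}\bigr)=2\,e^{-\gamma t/(6\lambda)}\sinh(\wh\rho(T-t))$. Combined with the factor $2\sinh(\wh\rho T)$ in the denominator, this yields exactly~\eqref{Sigma for n=2}.

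For $\Delta$, the decisive observation is that in the linear combination~\eqref{linear comb rep} the two $\rho$-modes carry coefficients $c(\rho_\pm)$ that do \emph{not} depend on $i$; hence they cancel in the difference $X^*_1-X^*_2$, which is therefore supported only on the $\theta$-modes. From~\eqref{ci(rho pm)} one computes $c_1(\theta_+)-c_2(\theta_+)=(x_2-x_1)/(e^{2\wh\theta T}-1)$ and $c_1(\theta_-)-c_2(\theta_-)=(x_1-x_2)/(1-e^{-2\wh\theta T})$. Writing $e^{\theta_\pm t}=e^{\gamma t/(2\lambda)}e^{\pm\wh\theta t}$, pulling out $e^{\gamma t/(2\lambda)}$, and using $e^{2\wh\theta T}-1=2e^{\wh\theta T}\sinh(\wh\theta T)$ together with $1-e^{-2\wh\theta T}=2e^{-\wh\theta T}\sinh(\wh\theta T)$, the remaining bracket simplifies to $\sinh(\wh\theta(T-t))/\sinh(\wh\theta T)$, producing~\eqref{Delta for n=2}.

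Since everything reduces to algebra already licensed by Theorem~\ref{alpha n=alpha b=0 thm}, there is no genuine analytic obstacle here; the only points requiring care are the bookkeeping in the two hyperbolic-sine rewritings and the signs of the $\theta$-coefficient differences. The conceptual content is the clean decoupling into the symmetric mode $\Sigma$, governed by the frequency $\wh\rho$, and the antisymmetric mode $\Delta$, governed by $\wh\theta$ — precisely the two distinct frequency scales already visible in Theorem~\ref{alpha n=alpha b=0 thm}.
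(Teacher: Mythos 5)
Your proposal is correct, and the symmetric/antisymmetric decomposition into $\Sigma$ and $\Delta$ is exactly the paper's. The only difference is in how $\Delta$ is obtained: the paper subtracts the two equations in~\eqref{Xi b bvp} to show that $\Delta$ solves the scalar two-point boundary value problem $\alpha\sigma^2\Delta+\gamma\dot\Delta-\lambda\ddot\Delta=0$ with $\Delta(0)=x_1-x_2$, $\Delta(T)=0$, and then verifies that~\eqref{Delta for n=2} solves it; you instead subtract the explicit coefficient formulas~\eqref{ci(rho pm)} in the representation~\eqref{linear comb rep}, observe that the $i$-independent $\rho$-modes cancel, and resum the surviving $\theta$-modes into a hyperbolic sine. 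Your route is slightly more self-contained, since it uses only the closed-form output of Theorem~\ref{alpha n=alpha b=0 thm} and pure algebra (your coefficient differences and the identities $e^{2\wh\theta T}-1=2e^{\wh\theta T}\sinh(\wh\theta T)$, $1-e^{-2\wh\theta T}=2e^{-\wh\theta T}\sinh(\wh\theta T)$ all check out); the paper's route is marginally more structural in that it exhibits $\Delta$ as the solution of a decoupled boundary value problem, which explains \emph{why} the antisymmetric mode is governed by the single-frequency $\wh\theta$ rather than merely confirming it. Both the $\Sigma$ computation and the sign bookkeeping in your $\theta$-coefficient differences are correct, so there is no gap.
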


The following mean-field limit is obtained in a straightforward manner by sending $n$ to infinity in Theorem~\ref{alpha n=alpha b=0 thm}.

\begin{corollary}\label{mean-field limit cor}In the setting of Theorem~\ref{alpha n=alpha b=0 thm}, suppose that $\lim_{n\ua\infty}\frac1n\sum_{j=1}^nx_j=\bbar x\in\bR$. Then, as $n\ua\infty$,  the equilibrium strategy of agent $i$ converges to  
\begin{align*}
 \frac{\bbar x-x_i}{e^{2\wh \theta T}-1}e^{\theta_+t}-  \frac{\bbar x-x_i}{1-e^{-2\wh \theta T}}e^{\theta_-t}+\frac{\bbar x}{1-e^{-\frac{\gamma T}\lambda}}e^{-\frac{\gamma t}\lambda}-\frac{\bbar x}{e^{\frac{\gamma T}\lambda}-1},
\end{align*}
where $\theta_+$, $\theta_-$, and $\wh\theta$ are as in~\eqref{thetarho+-} and~\eqref{wh rho-}.
\end{corollary}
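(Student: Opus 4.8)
The plan is to pass to the limit $n\to\infty$ directly in the closed-form representation \eqref{linear comb rep}--\eqref{ci(rho pm)} furnished by Theorem~\ref{alpha n=alpha b=0 thm}, handling separately the two pairs of exponential modes. First I would split $X^*_i$ into its \emph{$\theta$-part} $c_i(\theta_+)e^{\theta_+t}+c_i(\theta_-)e^{\theta_-t}$, which carries all the agent-specific dependence through $x_i$, and its \emph{$\rho$-part} $c(\rho_+)e^{\rho_+t}+c(\rho_-)e^{\rho_-t}$, which is common to every agent. The key structural point is that $\wh\theta$, $\theta_+$, and $\theta_-$ do not depend on $n$; hence the $\theta$-part depends on $n$ only through $\bbar x_n=\frac1n\sum_{j=1}^nx_j$. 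Since $\bbar x_n\to\bbar x$ by assumption, the $\theta$-part converges immediately to $\frac{\bbar x-x_i}{e^{2\wh\theta T}-1}e^{\theta_+t}-\frac{\bbar x-x_i}{1-e^{-2\wh\theta T}}e^{\theta_-t}$, which are precisely the first two terms of the claimed limit.

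The real content is the asymptotics of the $\rho$-modes. From \eqref{wh rho-} I would write $\wh\rho^2=\frac{(n-1)^2\gamma^2}{4(n+1)^2\lambda^2}+\frac{\alpha\sigma^2}{(n+1)\lambda}$ and read off that $\wh\rho\to\frac{\gamma}{2\lambda}$; combined with $-\frac{(n-1)\gamma}{2(n+1)\lambda}\to-\frac{\gamma}{2\lambda}$ in \eqref{thetarho+-} this gives $\rho_+\to0$ and $\rho_-\to-\frac{\gamma}{\lambda}$. Feeding these limits into \eqref{ci(rho pm)} and again using $\bbar x_n\to\bbar x$ yields $c(\rho_+)\to\frac{-\bbar x}{e^{\gamma T/\lambda}-1}$ and $c(\rho_-)\to\frac{\bbar x}{1-e^{-\gamma T/\lambda}}$, while $e^{\rho_+t}\to1$ and $e^{\rho_-t}\to e^{-\gamma t/\lambda}$. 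Assembling these, the $\rho$-part converges to $\frac{\bbar x}{1-e^{-\gamma T/\lambda}}e^{-\gamma t/\lambda}-\frac{\bbar x}{e^{\gamma T/\lambda}-1}$, the remaining two terms. Since each summand is a product of a convergent scalar with an exponential that converges uniformly for $t$ in the compact interval $[0,T]$, the convergence of $X^*_i$ is in fact uniform on $[0,T]$.

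I expect the only genuinely delicate point to be the degenerate case $\gamma=0$, where the limiting coefficients $c(\rho_\pm)$ become singular because $e^{\gamma T/\lambda}-1$ vanishes, so the formula in the statement must then be read as the limit $\gamma\downarrow0$. To check consistency I would argue instead from \eqref{Sigma general n eq}: when $\gamma=0$ one has $\rho_\pm=\pm\wh\rho$, so the $\rho$-part equals $\bbar x_n\,\sinh(\wh\rho(T-t))/\sinh(\wh\rho T)$, and since $\wh\rho=\sqrt{\alpha\sigma^2/((n+1)\lambda)}\to0$ this tends to the linear liquidation profile $\bbar x\,(1-t/T)$. An elementary limit computation shows that $\frac{\bbar x}{1-e^{-\gamma T/\lambda}}e^{-\gamma t/\lambda}-\frac{\bbar x}{e^{\gamma T/\lambda}-1}\to\bbar x\,(1-t/T)$ as $\gamma\downarrow0$ as well, so the two descriptions agree and no separate formula is required.
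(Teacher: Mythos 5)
Your proposal is correct and is exactly the argument the paper intends: the paper gives no separate proof of Corollary~\ref{mean-field limit cor}, stating only that it follows ``in a straightforward manner by sending $n$ to infinity'' in the closed-form representation \eqref{linear comb rep}--\eqref{ci(rho pm)}, which is precisely what you carry out, with the correct limits $\rho_+\to0$, $\rho_-\to-\gamma/\lambda$, $\wh\rho\to\gamma/(2\lambda)$. Your additional observation that the stated limit must be read as the limit $\gamma\downarrow0$ when $\gamma=0$ (where it degenerates to the linear profile $\bbar x\,(1-t/T)$) is a valid refinement that the paper does not address.
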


Note that the mean-field limit in the preceding corollary need not correspond to an infinite-player equilibrium. For instance, if $x_i=1$ for all $i$, then the conditions of Corollary~\ref{mean-field limit cor} are satisfied, but the combined price impact of all players will be infinite so that the price process in the infinite-player limit does not exist. A discussion of infinite-player equilibria for market impact games will be left for future research.  

\subsection{Qualitative discussion of the two-player Nash equilibrium}\label{Finite time qualitative section}

Throughout this section, $(X^*_1,X^*_2)$ will denote the two-player Nash equilibrium constructed in  Corollary~\ref{n=2 Nash eq cor}. It is interesting to compare the strategies $X^*_i$  with the optimal strategy of a single agent without competitors, which, as observed by \citet{Almgren}, is given by
\begin{align*}
X_0^*(t)=x_0\frac{\sinh(\kappa(T-t))}{\sinh(\kappa T)},
\end{align*}
where $x_0$ is the initial asset position and $\kappa=\sqrt{\alpha\sigma^2/2\lambda}$. This formula can also be obtained  by taking $n=1$ in~\eqref{Sigma general n eq}. To study the behavior of the strategies $X^*_0,X^*_1,X^*_2$, we will need the following  elementary  fact, whose proof is left to the reader.
 \begin{align}\label{sinh fact}
\text{ For $0<\nu<1$ the function $\displaystyle x\longmapsto\frac{\sinh(\nu x)}{\sinh(x)}$ is strictly decreasing on $[0,\infty)$.}
\end{align}
 It follows immediately from this fact that $X^*_0(t)$ is  a strictly decreasing function of $\alpha \sigma^2$ if $x_0>0$ and $0<t<T$. Economically, this  means that the agent will liquidate the initial asset position faster when the perceived volatility risk increases, because $\var(\cR(X^*_0))$ is proportional to $\alpha\sigma^2$ according to~\eqref{mean-variance = action functional eq} and~\eqref{Lagrangian} below. So the first guess would be that  the equilibrium strategy $X_1^*$  should also be a decreasing function of $\alpha\sigma^2$ when $x_1>0$. This guess is  analyzed and tested empirically by  \citet{Maug} for a large data set of block executions by large insiders. In our equilibrium model, however, 
  all we get from applying~\eqref{sinh fact} to~\eqref{X1*X2*} is the following partial result. 


\begin{proposition}\label{alphasigma2 prop}If  $x_1\ge x_2\ge0$, then $X^*_1(t)$ is a strictly decreasing function of $\alpha\sigma^2$ for $0<t<T$.
\end{proposition}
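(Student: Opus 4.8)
The plan is to read the dependence of $X_1^*$ on $\alpha\sigma^2$ directly off the closed-form representation in Corollary~\ref{n=2 Nash eq cor} and to reduce everything to the elementary monotonicity fact~\eqref{sinh fact}. By~\eqref{X1*X2*} we have $X_1^*(t)=\frac12(\Sigma(t)+\Delta(t))$ with $\Sigma$ and $\Delta$ given by~\eqref{Sigma for n=2} and~\eqref{Delta for n=2}. The crucial observation is that in each of these two expressions the parameter $\alpha\sigma^2$ enters \emph{only} through the $\sinh$-quotient, while the exponential prefactors $e^{-\gamma t/(6\lambda)}$ and $e^{\gamma t/(2\lambda)}$ and the coefficients $x_1\pm x_2$ are independent of it.

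First I would treat $\Sigma$. Writing $x:=\frac{T\sqrt{\gamma^2+12\alpha\lambda\sigma^2}}{6\lambda}$ and $\nu:=\frac{T-t}{T}$, the $\sinh$-quotient in~\eqref{Sigma for n=2} becomes exactly $\sinh(\nu x)/\sinh(x)$. For $0<t<T$ we have $0<\nu<1$, and $\nu$ does not depend on $\alpha\sigma^2$, whereas $x$ is a strictly increasing function of $\alpha\sigma^2$. Hence~\eqref{sinh fact}, applied with this fixed $\nu$, shows that the quotient is a strictly decreasing function of $\alpha\sigma^2$. Since the prefactor $(x_1+x_2)e^{-\gamma t/(6\lambda)}$ is strictly positive under the hypothesis $x_1\ge x_2\ge0$ (with $x_1>0$, which is the tacit nondegeneracy assumption, as $x_1+x_2>0\iff x_1>0$ in this range), the whole function $\Sigma(t)$ is strictly decreasing in $\alpha\sigma^2$.

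Next I would treat $\Delta$ in the same way, now with $y:=\frac{T\sqrt{\gamma^2+4\alpha\lambda\sigma^2}}{2\lambda}$ and the same $\nu$; again $y$ is strictly increasing in $\alpha\sigma^2$, so~\eqref{sinh fact} makes the corresponding quotient strictly decreasing. Here, however, the prefactor $(x_1-x_2)e^{\gamma t/(2\lambda)}$ is only \emph{nonnegative}, so $\Delta(t)$ is a nonincreasing function of $\alpha\sigma^2$ (strictly decreasing when $x_1>x_2$). Adding the two contributions, $X_1^*(t)=\frac12(\Sigma(t)+\Delta(t))$ is the sum of a strictly decreasing and a nonincreasing function of $\alpha\sigma^2$, hence strictly decreasing, which is the assertion.

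There is no substantial obstacle beyond bookkeeping; the one point requiring care is the sign of the two prefactors, and this is precisely where both halves of the hypothesis are used: $x_1\ge x_2$ guarantees $x_1-x_2\ge0$ so that $\Delta$ does not run in the opposite direction, while $x_2\ge0$ together with $x_1>0$ guarantees $x_1+x_2>0$ so that the strict monotonicity coming from $\Sigma$ is not destroyed. It is worth noting that the asymmetry of the statement — that it is claimed for $X_1^*$ but not for $X_2^*$ — reflects exactly this mechanism: for $X_2^*=\frac12(\Sigma-\Delta)$ the strictly decreasing contribution of $\Sigma$ competes with the nondecreasing contribution of $-\Delta$, so no definite monotonicity in $\alpha\sigma^2$ can be concluded in general.
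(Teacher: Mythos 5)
Your proof is correct and follows exactly the route the paper indicates: the paper's own justification is precisely ``applying~\eqref{sinh fact} to~\eqref{X1*X2*}'', i.e.\ reading off that $\alpha\sigma^2$ enters $\Sigma$ and $\Delta$ only through the $\sinh$-quotients and using the signs of the prefactors $x_1+x_2\ge 0$ and $x_1-x_2\ge 0$. Your added remarks on the degenerate case $x_1=x_2=0$ and on why the claim fails for $X_2^*$ are accurate but not needed beyond what the paper does.
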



As a matter of fact, the monotonicity in $\alpha\sigma^2$ may break down in the two-player Nash equilibrium if  the conditions $x_1\ge x_2$ and $x_2\ge0$ in Proposition~\ref{alphasigma2 prop}  are not both satisfied; see Figures~\ref{alphasigma2 fig1}
and~\ref{alphasigma2 fig2}. An intuitive explanation for this failure of monotonicity 
 is provided in  Figure~\ref{alphasigma explain fig}. 

Next, $X_0^*(t)$ is independent of $\gamma$, whereas both two-player equilibrium strategies are nontrivial functions of $\gamma$. The intuitive reason for this dependence is the fact that the permanent price impact created by the liquidation strategy of one agent is perceived as an additional price trend by the other agent.  

Moreover, $X_0^*(t)$ is an increasing function of $\lambda$ by~\eqref{sinh fact}.  The monotonicity in $\lambda$ has the clear economic intuition that increasing the transaction costs from temporary price impact reduces the benefits from an early liquidation and thus drives the optimal strategy toward the linear liquidation strategy, which is optimal in the risk-neutral case $\alpha=0$. The monotonicity of liquidation strategies as a function of $\lambda$ is   tested and analyzed empirically by \citet{Maug}. 
In our equilibrium model, 
the monotonic dependence of $X^*_1(t)$ on $\gamma$ and $\lambda$ can be obtained by applying~\eqref{sinh fact} to~\eqref{X1*X2*}, but only  if $x_1=x_2$. We thus get the following result. 


\begin{proposition}\label{gamma lambda prop}If $x_1=x_2\ge0$ then $X^*_1(t)=X_2^*(t)$ is a strictly decreasing function of $\gamma$ and a strictly increasing function of $\lambda$ for $0<t<T$.
\end{proposition}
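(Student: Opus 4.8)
The plan is to read off the closed form from Corollary~\ref{n=2 Nash eq cor} and reduce everything to the elementary monotonicity fact~\eqref{sinh fact}. First I would observe that when $x_1=x_2$ the amplitude $x_1-x_2$ in~\eqref{Delta for n=2} vanishes, so $\Delta\equiv0$ and hence $X^*_1=X^*_2=\frac12\Sigma$ by~\eqref{X1*X2*}; moreover $\frac12(x_1+x_2)=x_1$. Thus it suffices to analyse
\[
X^*_1(t)=x_1\,e^{-\frac{\gamma t}{6\lambda}}\,\frac{\sinh\bigl(\mu(T-t)\bigr)}{\sinh(\mu T)},\qquad \mu:=\frac{\sqrt{\gamma^2+12\alpha\lambda\sigma^2}}{6\lambda},
\]
and, since $x_1\ge0$, the claimed monotonicities of $X^*_1$ reduce to the corresponding monotonicities of the two positive factors (the case $x_1=0$ being trivial).

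The key reformulation is to set $\nu:=(T-t)/T$, so that $0<\nu<1$ for $0<t<T$ and the sinh-quotient becomes $\sinh(\nu\cdot\mu T)/\sinh(\mu T)$. By fact~\eqref{sinh fact} this quotient is a strictly decreasing function of the single variable $\mu T$; consequently it increases exactly when $\mu$ decreases and decreases exactly when $\mu$ increases. The whole argument then amounts to tracking the sign of the $\gamma$- and $\lambda$-derivatives of $\mu$, together with the obvious monotonicity of the prefactor $e^{-\gamma t/(6\lambda)}$.

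For the dependence on $\gamma$, both factors move in the same (decreasing) direction: the prefactor $e^{-\gamma t/(6\lambda)}$ is strictly decreasing in $\gamma$ for $t>0$, while $\mu$ is non-decreasing in $\gamma$, so by the previous paragraph the sinh-quotient is non-increasing in $\gamma$. A product of two positive factors, one strictly decreasing and the other non-increasing, is strictly decreasing, which gives the first assertion; note that strictness here is supplied by the prefactor alone, so it holds for all $t\in(0,T)$.

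The delicate part is the dependence on $\lambda$, because $\lambda$ enters in three places---the exponential prefactor, the denominator $6\lambda$, and inside the square root---so its net effect on $\mu$ is not visible by inspection. I would dispose of this by the algebraic simplification
\[
\mu^2=\frac{\gamma^2}{36\lambda^2}+\frac{\alpha\sigma^2}{3\lambda},
\]
which is manifestly strictly decreasing in $\lambda$; hence $\mu T$ decreases in $\lambda$ and, by fact~\eqref{sinh fact}, the sinh-quotient strictly increases in $\lambda$. Since the prefactor $e^{-\gamma t/(6\lambda)}$ is also non-decreasing in $\lambda$, the product is strictly increasing, giving the second assertion. The one point requiring care is genuine strictness of $\partial_\lambda\mu^2<0$, which uses the standing assumption $\alpha>0$ together with $\sigma>0$ (or $\gamma>0$) to rule out the degenerate flat case; this is the only place where the hypotheses of Theorem~\ref{alpha n=alpha b=0 thm} are actually invoked, and it is where I expect the main (though modest) obstacle to lie.
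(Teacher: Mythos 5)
Your proof is correct and follows exactly the route the paper indicates (the paper gives no separate written proof, stating only that the result follows by applying the elementary fact~\eqref{sinh fact} to the closed form~\eqref{X1*X2*} when $x_1=x_2$, so that $\Delta\equiv0$ and $X_1^*=\tfrac12\Sigma$). Your explicit computation that $\mu^2=\gamma^2/(36\lambda^2)+\alpha\sigma^2/(3\lambda)$ is decreasing in $\lambda$ is precisely the detail the paper leaves implicit, and your caveat about the degenerate case $\gamma=\sigma=0$ is a fair observation about the statement rather than a gap in the argument.
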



 As shown in Figures~\ref{lambda fig} and~\ref{gamma fig}, the monotonic dependence on $\gamma$ or $\lambda$ may break down if the condition $x_1=x_2$ from Proposition~\ref{gamma lambda prop} is not satisfied. The intuitive explanation for these effects is similar to the one for the breakdown of monotonicity for $\alpha\sigma^2$. For instance, when $\lambda$ increases in a Nash equilibrium with $0<x_1\ll x_2$, both agents receive an incentive to reduce the curvature of their strategies, that is, to sell slower in the first part of the trading interval and to sell faster during the second part. Agent~2 will therefore create less price impact during the first part of $[0,T]$ and more price impact in the second part. In equilibrium, this change in price impact generated by one trader also  creates a second, competing incentive for the other trader, namely to increase trading speed during the first part of $[0,T]$ and to reduce it during the second part when the unfavorable price impact generated by the competitor is increased. When the position of agent~1 is smaller than the one of agent~2, this second incentive can dominate the first one quantitatively and hence trigger a decrease of $X^*_1(1)$, as observed in Figure~\ref{lambda fig} for  $\lambda<0.05$.

\bigskip

\hspace{-0.7cm}
\begin{minipage}[b]{8cm}
\centering
\includegraphics[width=5cm]{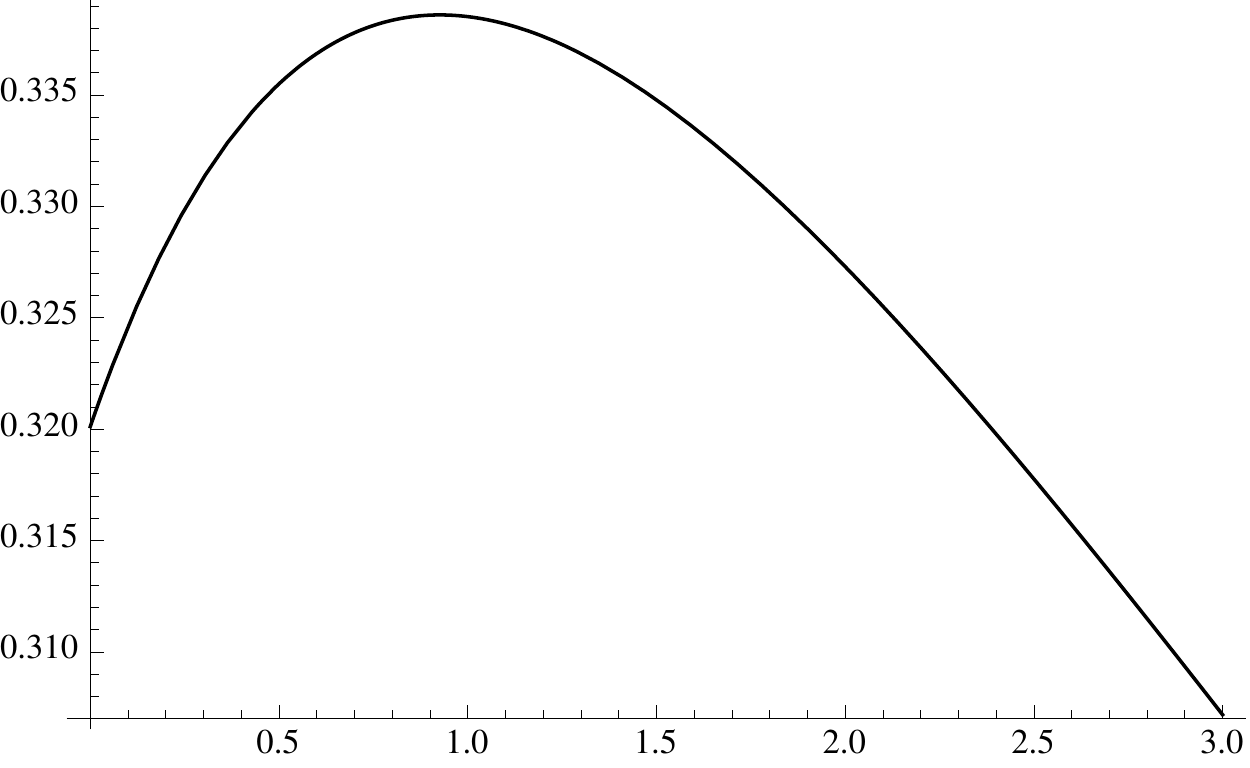}
\captionof{figure}{
 $X^*_1(1)$  as a function of $\alpha\sigma^2$ for $x_1=1.12 $, $x_2= 2.06$, $T=2$, and $\lambda=\gamma=1$.}
\label{alphasigma2 fig1}
\end{minipage}\qquad
\begin{minipage}[b]{8cm}
\centering
\includegraphics[width=5cm]{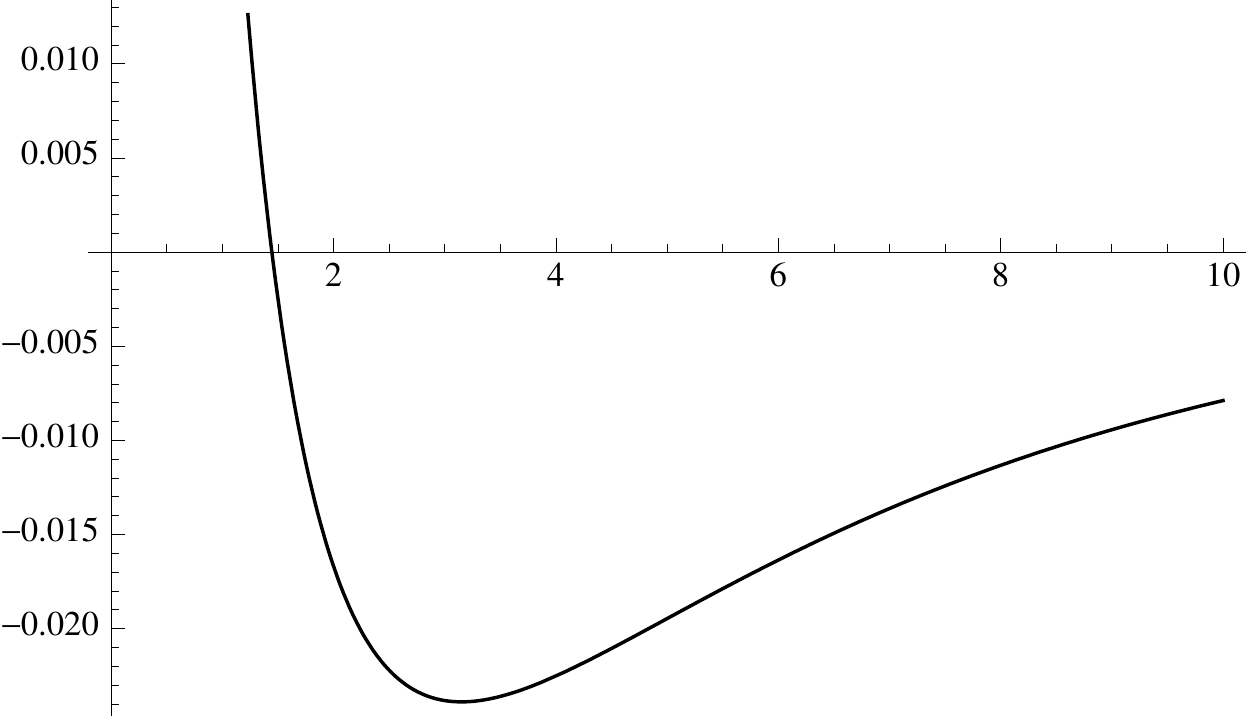}
\captionof{figure}{ $X^*_1(1)$  as a function of $\alpha\sigma^2$ for $x_1=0.7 $, $x_2= -1.9$, $T=2$,  $\lambda=0.2$, and $\gamma=0.1$.}
\label{alphasigma2 fig2}
\end{minipage}

\begin{figure}[h]
\centering\includegraphics[width=4.9cm]{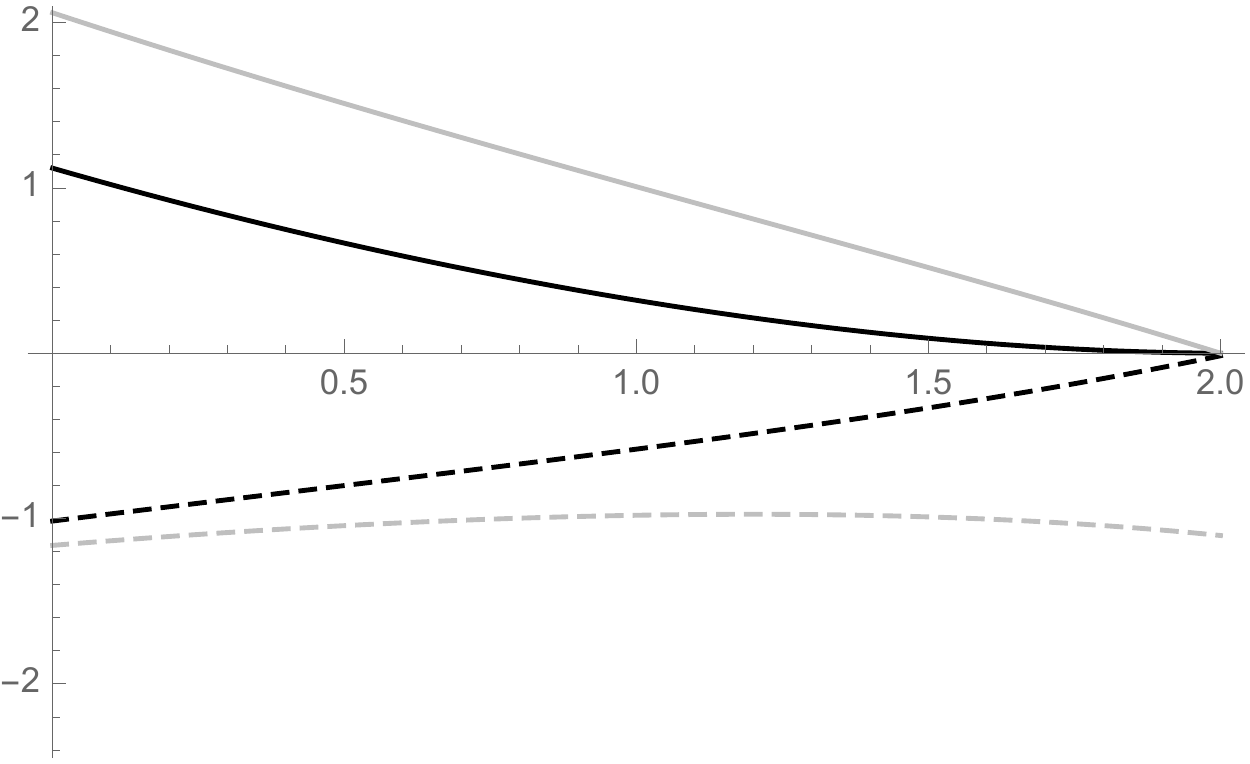}\qquad
\includegraphics[width=4.9cm]{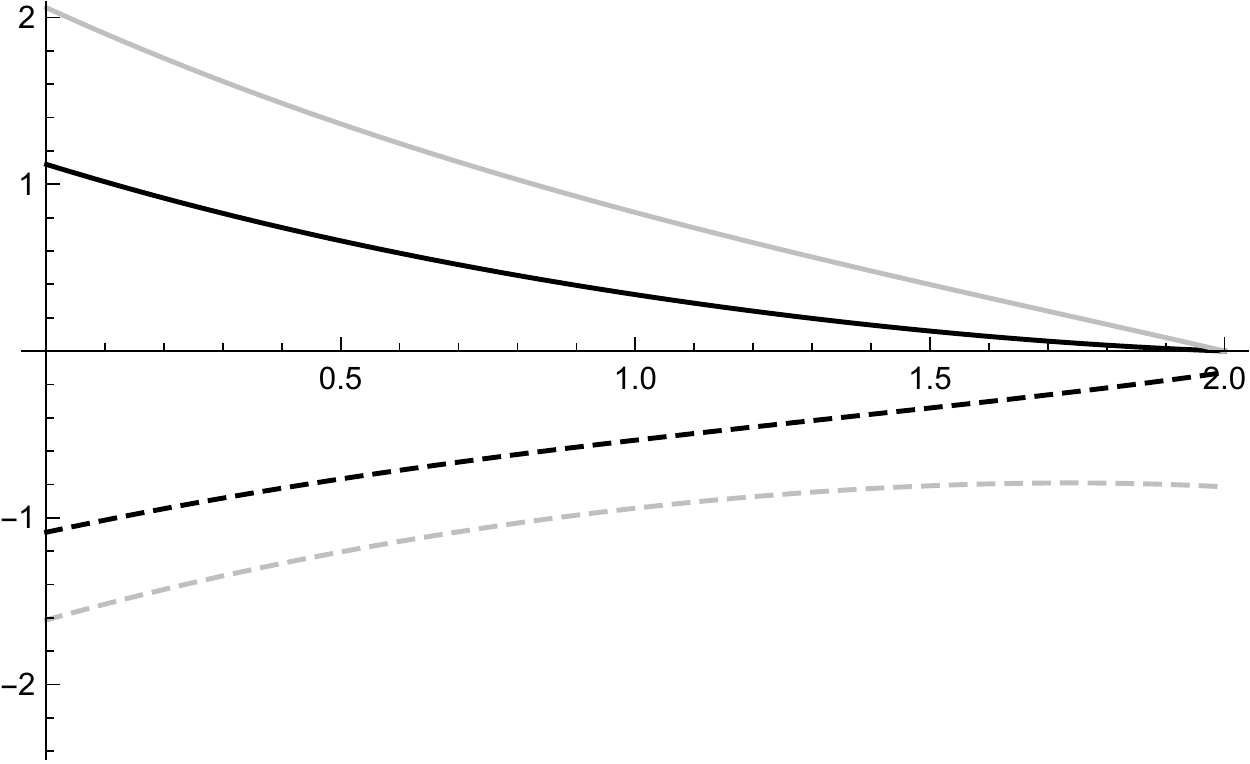}\qquad
\includegraphics[width=4.9cm]{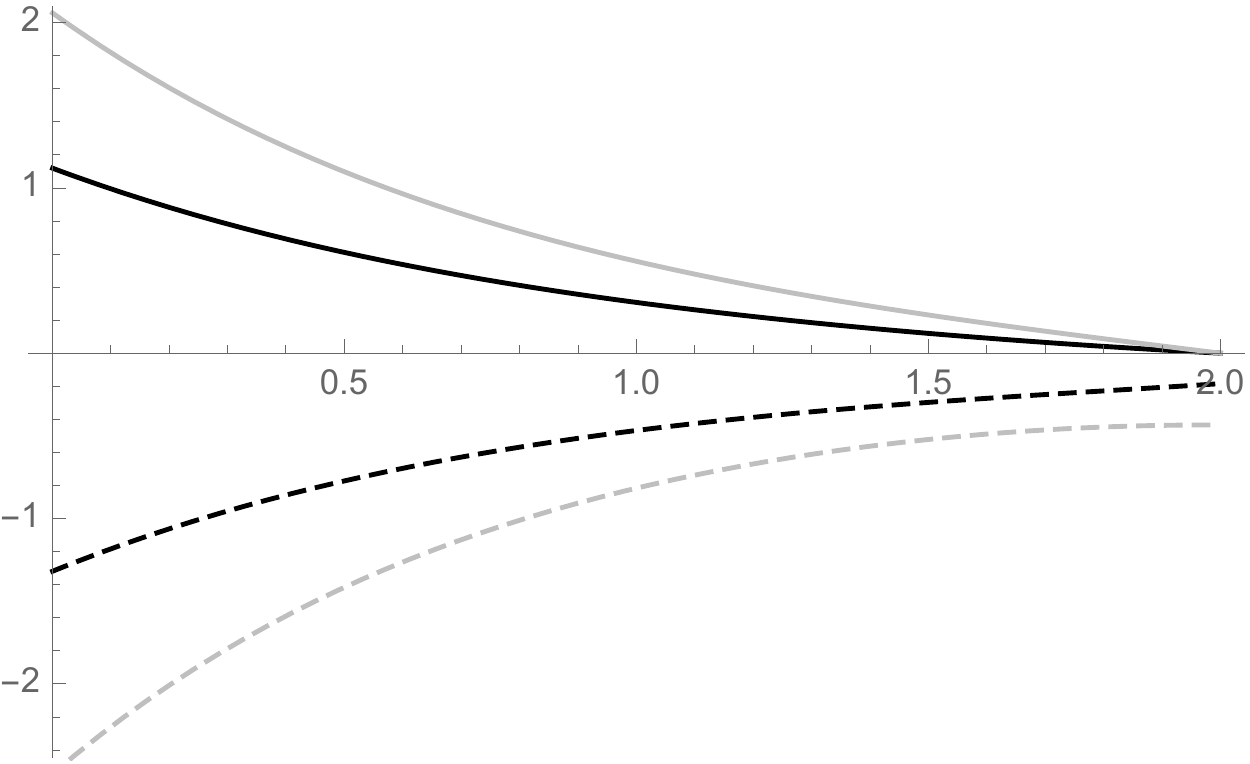}
\caption{Equilibrium strategies $X^*_i(t)$ (solid) and trading rates
 $\dot X^*_i(t)$ (dashed) for agents $i=1$ (black) and $i=2$ (grey)  as functions of $t\in[0,T]$ for  $\alpha\sigma^2=0$ (left), $\alpha\sigma^2=0.8$ (center),  and $\alpha\sigma^2=3$ (right); the remaining parameters are as in Figure~\ref{alphasigma2 fig1}.  When $\alpha\sigma^2$ is increased from 0  to 0.8, agent~2  receives a relatively high increase in volatility risk and therefore speeds up liquidation throughout the first half of $[0,T]$, while  slowing down in the second half. The volatility risk of agent~1 also increases, but it does so less than for agent~2 and leads only to a small initial increase of the liquidation speed $-\dot X_1^*(t)$. On the other hand, the increased price pressure from the temporary impact of agent~2 results in  unfavorable asset prices for agent~1 in  the first half of $[0,T]$, and this latter effect can outweigh the increased volatility risk to some extend. Therefore, it is beneficial for agent~1 to delay selling in the central part of the time interval $[0,T]$ and to compensate by accelerating the strategy  toward the end. This effect leads to the  increase of the intermediate asset position $X_1^*(1)$ as observed in Figure~\ref{alphasigma2 fig1}. When $\alpha\sigma^2$ increases even further, the increase in volatility risk becomes dominant, and so $X_1^*(1)$ starts to decrease again. 
}\label{alphasigma explain fig}
\end{figure}

\bigskip

\hspace{-0.7cm}
\begin{minipage}[b]{8cm}
\centering
\includegraphics[width=5cm]{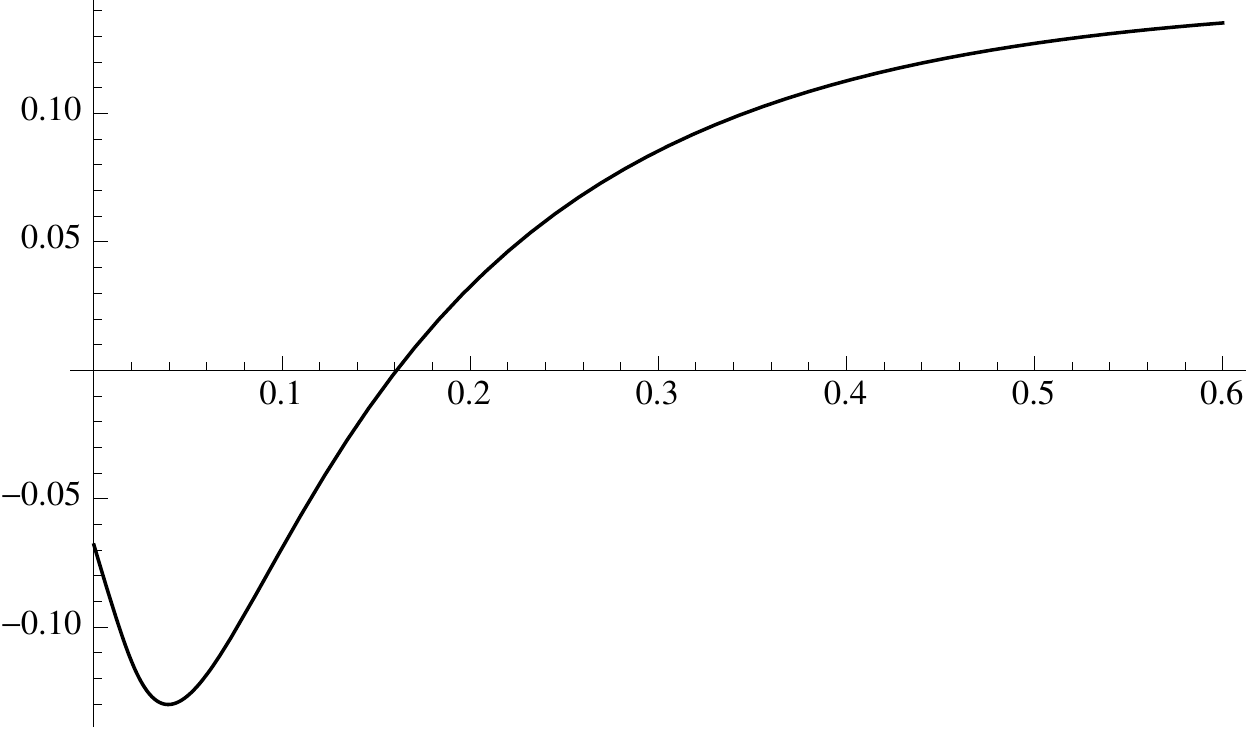}
\captionof{figure}{ $X^*_1(1)$  as a function of $\lambda$ for $x_1=0.2$, $x_2=4$, $T=2$, $\alpha\sigma^2=1$, and $\gamma=0.3$.}
\label{lambda fig}
\end{minipage}
\qquad
\begin{minipage}[b]{8cm}
\centering
\includegraphics[width=5cm]{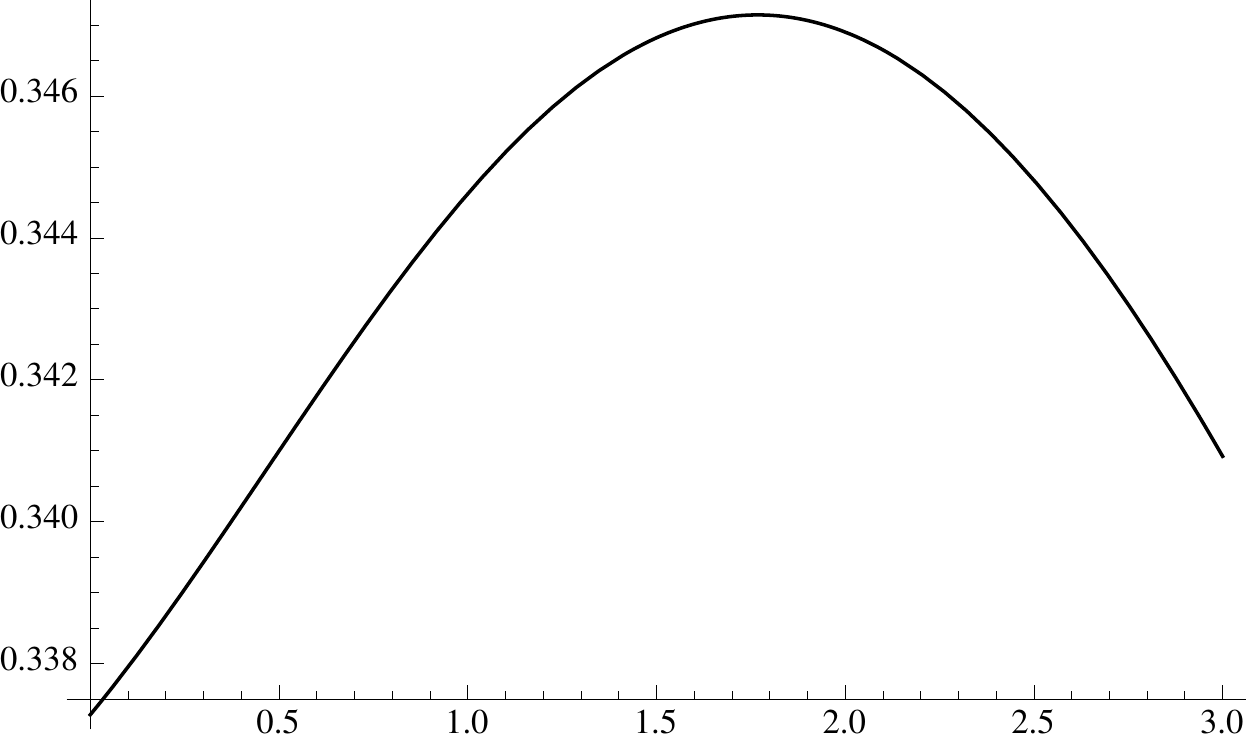}
\captionof{figure}{ $X^*_1(1)$ as a function of $\gamma$ for $x_1=0.86$, $x_2=0.28$, $T=2$, $\alpha\sigma^2=1$, and $\lambda=1$. }
\label{gamma fig}
\end{minipage}

\section{Nash equilibrium with infinite time horizon}
\label{infinite-horizon section}

Now, we consider  mean-variance optimization and CARA utility maximization for an infinite time horizon $[0,\infty)$.  Financially, this problem corresponds to a situation in which none of the agents faces a material time constraint. To simplify the discussion, we assume from the beginning that the drift $b(\cdot)$ vanishes identically. Then, the  unaffected price process is given by $
S^0(t)=S_0+\sigma W(t) $ for $t\ge0$. Here, we need to assume  that $\sigma\neq0$.
 If only one agent is active, we are in the situation of \citet{SchiedSchoeneborn}, where the problem of maximizing the expected utility of revenues is discussed for an infinite time horizon. As discussed there, a strategy $(X(t))_{t\ge0}$ should satisfy the following conditions of admissibility so that the utility-maximization problem is well-defined for a single agent:\\
 $\bullet$  $X$ is adapted to the filtration $(\cF_t)_{t\ge0}$;\\
$\bullet$  $X$ is absolutely continuous in the sense that $X(t)=X(0)+\int_0^t\dot X(s)\,ds$ for some progressively measurable process $(\dot X(t))_{t\ge0}$ for which
\begin{align}\label{infinite horizon admissibility 1}
\int_0^\infty(\dot X(t))^2\,dt<\infty\qquad\text{$\bP$-a.s.;}
\end{align}
$\bullet$  $X$ is bounded and satisfies\begin{align}\label{infinite horizon admissibility 2}
\bE\Big[\,\int_0^\infty X(t)^2\,dt\,\Big]<\infty\qquad\text{and}\qquad \lim_{t\ua\infty}(X(t))^2t\log\log t=0\text{ $\bP$-a.s.}
\end{align}
The class of all strategies that are admissible in this sense and satisfy $X(0)=x$ for given $x\in\bR$ will be denoted by $\cX(x,\infty)$. As before, we denote by $\cX_{\text{det}}(x,\infty)$ the subclass of all deterministic strategies in $\cX(x,\infty)$. 
When the admissible strategy $X$ is used, the affected price process is 
$$S^X(t)=S^0(t)+\gamma(X(t)-X(0))+\lambda\dot X(t).
$$
It is shown in \citet[Section 3.1]{SchiedSchoeneborn} that the total revenues of $X\in\cX(x,\infty)$ are $\bP$-a.s. well-defined as the limit
$$\cR(X):=-\lim_{T\ua\infty}\int_0^T\dot X(t)S^X(t)\,dt=xS_0-\frac\gamma2x^2+\sigma\int_0^\infty X(t)\,dW(t)-\lambda\int_0^\infty(\dot X(t))^2\,dt
$$
(see also Lemma~\ref{infinite horizon revenues lemma}  below). Moreover, for $\alpha>0$ and $u_\alpha$ as in~\eqref{ualpha}, the unique strategy that maximizes the expected utility $\bE[\,u_\alpha(\cR(X))\,]$ over $X\in\cX(x,\infty)$ is given by 
$$
X^*_0(t)=x\exp\Big(- t\sqrt{\frac{\alpha\sigma^2}{2\lambda}}\Big),\qquad t\ge0;
$$
see Corollary 4.4 in \citet{SchiedSchoeneborn}. As $\cR(X)$ is a Gaussian random variable for $X\in\cX_{\text{det}}(x,\infty)$, one sees that 
$$\bE[\,u_\alpha(\cR(X))\,]=\frac1\alpha\Big(1-e^{-\alpha \bE[\,\cR(X)\,]+\frac{\alpha^2}2\var(\cR(X))}\Big),\qquad X\in\cX_{\text{det}}(x,\infty),
$$
and so $X^*_0$ also maximizes the  mean-variance functional $\bE[\,\cR(X)\,]-\frac\alpha2\var(\cR(X))$ over $X\in \cX_{\text{det}}(x,\infty)$.

When  $n$ investors apply admissible strategies $X_1,X_2,\dots, X_n$, the affected price $S^{X_1,\dots,X_n}(t)$ is again given by~\eqref{affected price process n players eq}, as in the case of a finite time horizon. 
It will follow from Lemma~\ref{infinite horizon revenues lemma}
 below that the admissibility of $X_1,X_2,\dots, X_n$ guarantees that the following limit exists $\bP$-a.s.:
$$
\cR(X_i|\bm X_{-i}):=-\lim_{T\ua\infty}\int_0^T\dot X_i(t)S^{X_1,\dots,X_n}(t)\,dt.
$$
The  Nash equilibria for mean-variance optimization and CARA utility maximization can now be defined by taking $T=\infty$ in Definition~\ref{Nash eq def}. Here is our result on the existence and uniqueness of   Nash equilibria.


\begin{theorem}\label{Nash eq exist thm infinite horizon}Suppose that one of the following two conditions holds:
\begin{enumerate}
\item \label{Nash eq exist thm infinite horizon (a)}$n\in\bN$ is arbitrary and $\alpha_1=\cdots=\alpha_n=\alpha>0$;
\item $n=2$ and $\alpha_1$ and  $\alpha_2$ are distinct and strictly positive.\label{Nash eq exist thm infinite horizon (b)}
\end{enumerate}
Then, for all $x_1,\dots, x_n\in\bR$,  there exists a unique Nash equilibrium $(X^*_1,\dots,X^*_n)$ for mean-variance optimization with infinite time horizon. Moreover,  $(X^*_1,\dots,X^*_n)$ is also a Nash equilibrium for CARA utility maximization with infinite time horizon.

In case~\ref{Nash eq exist thm infinite horizon (a)}, the optimal strategies are given by  
\begin{align}\label{Nash eq exist thm infinite horizon strategy}
X^*_i(t)=(x_i-\bbar x_n)e^{\theta_-t}+\bbar x_ne^{\rho_-t},
\end{align}
where again $\bbar x_n=\frac1n\sum_{j=1}^nx_j$ and $\rho_-$ and $\theta_-$ are as in~\eqref{thetarho+-}.

In case~\ref{Nash eq exist thm infinite horizon (b)}, the fourth-order equation
$$
\tau^4-\frac{2\gamma}{3\lambda}\tau^3-\frac{\gamma^2+2\lambda\sigma^2(\alpha_1+\alpha_2)}{3\lambda^2}\tau^2+\frac{\sigma^4\alpha_1\alpha_2}{3\lambda^2}=0
$$
has precisely  two distinct  roots, $\tau_1$, $\tau_2$, in $(-\infty,0)$, and the equilibrium strategies $X^*_1(t)$ and $X^*_2(t)$ are linear combinations of the exponential functions $e^{\tau_1t}$ and $e^{\tau_2t}$.
\end{theorem}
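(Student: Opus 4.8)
The plan is to follow the same variational route as in the finite-horizon Theorem~\ref{Nash Eq exist thm}, replacing the terminal constraint $X_i(T)=0$ by the decay of $X_i$ that is forced by the admissibility conditions~\eqref{infinite horizon admissibility 1} and~\eqref{infinite horizon admissibility 2}. First I would fix the competitors' deterministic strategies $\bm X_{-i}$ and observe that, for a deterministic $X_i$, the only stochastic contribution to $\cR(X_i\mid\bm X_{-i})$ is $\sigma\int_0^\infty X_i\,dW$, so that by the It\^o isometry the mean-variance functional of player $i$ equals, up to an additive constant,
$$-\lambda\int_0^\infty\dot X_i(t)^2\,dt-\frac{\alpha_i\sigma^2}2\int_0^\infty X_i(t)^2\,dt-\int_0^\infty\dot X_i(t)\Big[\gamma\sum_{j\neq i}(X_j(t)-x_j)+\lambda\sum_{j\neq i}\dot X_j(t)\Big]dt.$$
Since $\lambda>0$ and $\alpha_i\sigma^2>0$, this is a strictly concave and coercive quadratic functional of $X_i$, so the best response exists, is unique, and is characterized by its Euler--Lagrange equation together with $X_i(0)=x_i$ and decay at infinity. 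Integration by parts, whose boundary terms at infinity vanish thanks to~\eqref{infinite horizon admissibility 1}--\eqref{infinite horizon admissibility 2}, turns this into exactly the system~\eqref{ode system 1 thm} with $b\equiv0$, now subject to $X_i(0)=x_i$ and $X_i(t)\to0$. Hence a collection of admissible deterministic strategies is a Nash equilibrium if and only if it solves this linear system with the prescribed decay, and both existence and uniqueness reduce to those of the decaying solution with the given initial data.

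For case~\ref{Nash eq exist thm infinite horizon (a)} I would exploit the permutation symmetry as in Corollary~\ref{drift cor}: summing the equations shows $\Sigma=\sum_iX_i$ solves the homogeneous version of~\eqref{Sigma b bvp}, with characteristic roots $\rho_\pm$, and keeping the unique decaying mode gives $\Sigma(t)=n\bbar x_ne^{\rho_-t}$. Substituting back, each $X_i$ solves the homogeneous version of~\eqref{Xi b bvp}, with characteristic roots $\theta_\pm$; retaining the decaying homogeneous mode $e^{\theta_-t}$ and the forced mode $e^{\rho_-t}$, and fixing the two constants by $X_i(0)=x_i$ together with a short computation that uses the $\Sigma$-equation to identify the forced coefficient as $\bbar x_n$, yields the closed form~\eqref{Nash eq exist thm infinite horizon strategy}. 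One checks that $\sum_i(x_i-\bbar x_n)=0$, so the representation is consistent with $\Sigma$, and that the exponential decay gives admissibility.

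The heart of case~\ref{Nash eq exist thm infinite horizon (b)} is the root count. Searching for $X_i=c_ie^{\tau t}$ in the two coupled second-order equations and eliminating $(c_1,c_2)$ gives the compatibility relation $(2\lambda\tau^2-\alpha_1\sigma^2)(2\lambda\tau^2-\alpha_2\sigma^2)=\tau^2(\lambda\tau+\gamma)^2$, which after division by $3\lambda^2$ is precisely the displayed quartic. Writing $f(\tau)$ for the left minus the right side, I would note $f(0)=\alpha_1\alpha_2\sigma^4>0$ and $f(\tau)\to+\infty$ as $\tau\to-\infty$, while at each of $-\sqrt{\alpha_i\sigma^2/(2\lambda)}$ one factor vanishes and hence $f\le0$; taking $\alpha_1<\alpha_2$, this produces a zero in each of $(-\infty,-\sqrt{\alpha_2\sigma^2/(2\lambda)})$ and $(-\sqrt{\alpha_1\sigma^2/(2\lambda)},0)$, hence at least two distinct negative roots, and Descartes' rule caps the number of negative roots at two, giving exactly two, $\tau_1\neq\tau_2$. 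To guarantee that the decaying subspace is exactly two-dimensional I would rule out further stable roots: by Vieta the four roots have product $\sigma^4\alpha_1\alpha_2/3\lambda^2>0$ and sum $2\gamma/3\lambda\ge0$, so the remaining two have positive product and positive sum and therefore positive real part, whether real or a complex-conjugate pair. Thus $X_1,X_2$ are the unique decaying solutions, are linear combinations of $e^{\tau_1t}$ and $e^{\tau_2t}$, and the two coefficients are fixed by $X_1(0)=x_1$, $X_2(0)=x_2$ once one checks that the eigen-ratios attached to $\tau_1$ and $\tau_2$ differ.

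Finally, the assertion that the mean-variance equilibrium is also a CARA equilibrium follows the template of Corollary~\ref{CARA Nash Eq exist cor}: with the competitors frozen at their deterministic equilibrium strategies, player $i$ faces a single-agent infinite-horizon CARA problem perturbed only by a deterministic, exponentially decaying price trend from the competitors' impact, which is exactly the setting of \citet{SchiedSchoeneborn}; there the deterministic mean-variance maximizer coincides with the CARA optimum and no adapted deviation improves it, the only new point being that the relevant exponential local martingale is a true martingale and that all boundary terms at infinity vanish, which is what the integrability and law-of-the-iterated-logarithm conditions~\eqref{infinite horizon admissibility 2} ensure. I expect the root count in case~\ref{Nash eq exist thm infinite horizon (b)}---in particular excluding spurious stable complex roots so that the initial data are matched uniquely---to be the main obstacle, with the infinite-horizon boundary-term justifications a close second.
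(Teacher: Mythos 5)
Your overall architecture matches the paper's: reduce to the coupled Euler--Lagrange system with decay at infinity (the paper's Lemma~\ref{one-agent optimization problem lemma infinite horizon}), solve case~\ref{Nash eq exist thm infinite horizon (a)} via the $\Sigma$-decomposition (equivalent to the eigenspace computation in Lemma~\ref{matrix M lemma}), and analyze the quartic in case~\ref{Nash eq exist thm infinite horizon (b)}. For the root count you take a genuinely different and arguably cleaner route: the paper locates the unique negative critical point $t_-$ of the characteristic polynomial $\chi$ and shows $\chi(t_-)<0$ by a comparison with the equal-$\alpha$ case, whereas you evaluate the factored determinant $(2\lambda\tau^2-\alpha_1\sigma^2)(2\lambda\tau^2-\alpha_2\sigma^2)-\tau^2(\lambda\tau+\gamma)^2$ at $\tau=-\sqrt{\alpha_i\sigma^2/(2\lambda)}$ to produce two sign changes, cap the count of negative real roots at two by Descartes, and exclude stable complex roots by Vieta. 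I checked the signs; this is correct and avoids the paper's unwieldy closed form for $\chi(t_-)$.

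There are, however, two gaps. The substantive one is the step you defer with ``once one checks that the eigen-ratios attached to $\tau_1$ and $\tau_2$ differ.'' Two distinct negative eigenvalues of $M$ give eigenvectors ${\bm v_1\choose\tau_1\bm v_1}$ and ${\bm v_2\choose\tau_2\bm v_2}$, but matching arbitrary initial data $(x_1,x_2)$ requires $\bm v_1$ and $\bm v_2$ to be linearly independent in $\bR^2$, and this does \emph{not} follow from $\tau_1\neq\tau_2$: the full eigenvectors in $\bR^4$ are independent, but their projections onto the first two coordinates need not be a priori. The paper devotes the final two paragraphs of its proof to exactly this point, showing via~\eqref{vector quadratic eqn} that a fixed direction $\bm w$ forces $\tau$ to solve a quadratic whose two roots have opposite signs (treating $w_1+w_2\neq0$ and $w_1=-w_2$ separately), so that one direction cannot serve two distinct negative eigenvalues; you need to supply this or an equivalent argument. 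The second, softer issue is your ``if and only if'' reduction of uniqueness to uniqueness of the decaying solution of the linear system: the ``only if'' direction presupposes that every Nash equilibrium is $C^2$ and satisfies the Euler--Lagrange equations, which requires a regularity argument for best responses against merely admissible (not necessarily $C^2$) competitors. The paper sidesteps this by proving uniqueness of Nash equilibria directly through the convexity argument of Lemma~\ref{Equilibrium uniqueness lemma}, adapted to the infinite horizon; your observation of strict concavity is the right ingredient, but it must be applied at the level of the equilibrium rather than of the ODE system.
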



On the one hand, the structure of the equilibrium strategies for an infinite time horizon appears to be simpler than for the finite-time situation. On the other hand, the assumptions of Theorem~\ref{Nash eq exist thm infinite horizon} are more restrictive than those of  Theorem~\ref{Nash Eq exist thm}. More restrictive assumptions are needed, because   all solutions $X_1(t),\dots, X_n(t)$ of the system~\eqref{ode system 1 thm} are linear combinations of exponential functions and thus can only take the limits $\pm\infty$ and $0$ for $t\ua\infty$. We must single out those with limit $0$. To this end, we cannot apply standard results on the existence of solutions for boundary value problems on noncompact intervals such as those in \citet{Cecchi}, where it is required that the possible boundary values at $t=\infty$ include the full space $\bR^n$. Instead, we   show here that the eigenspaces associated with the negative eigenvalues of a certain nonsymmetric matrix $M$ are sufficiently rich. For $n>2$, we are only able to understand these eigenspaces when $\alpha_1=\cdots=\alpha_n$.

\begin{remark}\label{convergence rem} In the situation of part~\ref{Nash eq exist thm infinite horizon (a)} of Theorem~\ref{Nash eq exist thm infinite horizon}, consider the corresponding  Nash equilibrium $X_1^{(T)},\dots, X_n^{(T)}$ for the finite time interval $[0,T]$ as constructed in Theorem~\ref{alpha n=alpha b=0 thm}. Then, we conclude  from~\eqref{linear comb rep} and~\eqref{ci(rho pm)} that 
$$
\lim_{T\ua\infty}X_i^{(T)}(t)=X_i^*(t),\qquad\text{for $i=1,\dots, n$ and  $t\ge0$,}
$$
where $X_i^*$ is as in~\eqref{Nash eq exist thm infinite horizon strategy}.
\end{remark}


Let us finally discuss some qualitative properties of the Nash equilibrium in part~\ref{Nash eq exist thm infinite horizon (a)} of Theorem~\ref{Nash eq exist thm infinite horizon}.  \citet{Carlinetal} and \citet{SchoenebornSchied} study, among other things,  whether the liquidation of a large block of shares by agent~1 leads either to  predatory trading or liquidity provision 
by the other agents if these all have zero initial capital (i.e., $x_i=0$ for $i\neq1$).  Here, predatory trading refers to a strategy during which the asset is  shortened at the initial high price and then bought back later when the sell strategy of agent~1 has depreciated the asset price. This strategy is \lq\lq predatory" in the sense that the revenues it generates  for agent $i$ are made at the expense of agent~1. Liquidity provision refers to exactly the opposite strategy: agent $i$  acquires a long position by first  buying and later re-selling  some of the shares agent~1 is liquidating. It can hence be seen as a cooperative behavior on behalf of agent $i$. Both \citet{Carlinetal}
and \citet{SchoenebornSchied} consider risk-neutral agents who need to close their positions in finite time. In  \citet{Carlinetal}, all agents face the same time constraint. In this case, liquidity provision can only be observed if cooperation is enforced by repeating the game. \citet{SchoenebornSchied} admit a longer time horizon for agents $i=2,\dots,n$ than for agent~1 and find that this relaxation can lead to liquidity provision  for certain  parameter values without having to repeat the game. Our corresponding result  is Corollary~\ref{predatory cor} below. It states that, on an infinite time horizon,  both predatory trading and liquidity provision can occur, depending on the parameters of the model; see also Figure~\ref{PredatoryVSLiquidity Fig} for an illustration. Together with Remark~\ref{convergence rem},  Corollary~\ref{predatory cor} implies that  liquidity provision can also occur if all agents share the same time horizon $T$, provided that $T$ is sufficiently large. This fact that is markedly different from the risk-neutral case $\alpha=0$ considered in  \citet{Carlinetal}.


\begin{corollary}\label{predatory cor}In the situation of part~\ref{Nash eq exist thm infinite horizon (a)} of Theorem~\ref{Nash eq exist thm infinite horizon}, suppose that $\sum_{i=1}^nx_i>0$. Then an agent with $x_i=0$ engages in liquidity provision in the sense that $X^*_i(t)>0$ for all $t>0$, if and only if $\alpha\sigma^2\lambda >2\gamma^2$. When  $\alpha\sigma^2\lambda <2\gamma^2$ this agent engages in predatory trading, and for  $\alpha\sigma^2\lambda =2\gamma^2$ the agent does not trade at all.
\end{corollary}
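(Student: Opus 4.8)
The plan is to reduce everything to a comparison of the two negative exponents $\theta_-$ and $\rho_-$ appearing in the equilibrium formula~\eqref{Nash eq exist thm infinite horizon strategy}. For an agent with $x_i=0$ that formula gives
$$X^*_i(t)=\bbar x_n\big(e^{\rho_- t}-e^{\theta_- t}\big),$$
and the hypothesis $\sum_j x_j>0$ says exactly that $\bbar x_n>0$. Both $\theta_-$ and $\rho_-$ are negative: each is the smaller root of a quadratic whose constant term $-\alpha\sigma^2<0$ forces the two roots to have opposite signs. Hence for every $t>0$ the sign of $X^*_i(t)$ equals the sign of $\rho_- -\theta_-$, and I would record the three equivalences: liquidity provision ($X^*_i>0$ on $(0,\infty)$) corresponds to $\rho_->\theta_-$; predatory trading ($X^*_i<0$, i.e.\ a short position that is opened and later closed, since $X^*_i(0)=\lim_{t\to\infty}X^*_i(t)=0$) to $\rho_-<\theta_-$; and $X^*_i\equiv0$ to $\rho_-=\theta_-$. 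The whole statement thus becomes the claim that $\rho_- -\theta_-$ has the same sign as $\alpha\sigma^2\lambda-2\gamma^2$.

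To locate $\theta_-$ relative to $\rho_-$ I would avoid manipulating the square roots in~\eqref{wh rho-} directly and instead work with the defining quadratics. Recall that $\theta_-$ is the negative root of $g(\tau):=\lambda\tau^2-\gamma\tau-\alpha\sigma^2$ and $\rho_-$ the negative root of $h(\tau):=(n+1)\lambda\tau^2+(n-1)\gamma\tau-\alpha\sigma^2$; these are the characteristic polynomials of~\eqref{Xi b bvp} and~\eqref{Sigma b bvp} with $b=0$. Using $g(\theta_-)=0$ to eliminate $\lambda\theta_-^2$ in $h(\theta_-)$ yields the clean identity
$$h(\theta_-)=n\big(2\gamma\theta_- +\alpha\sigma^2\big).$$
Since $h$ is an upward parabola with roots $\rho_-<0<\rho_+$ and $\theta_-<0$, the sign of $h(\theta_-)$ pins down the order: $h(\theta_-)>0$ iff $\theta_-<\rho_-$ (liquidity provision), $h(\theta_-)<0$ iff $\theta_->\rho_-$ (predatory), and $h(\theta_-)=0$ iff $\theta_-=\rho_-$.

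It then remains to compute the sign of $2\gamma\theta_- +\alpha\sigma^2$. Substituting $\theta_-=\tfrac{1}{2\lambda}\big(\gamma-\sqrt{\gamma^2+4\alpha\sigma^2\lambda}\big)$ and clearing $\lambda$, this sign equals that of $N:=\gamma^2+\alpha\sigma^2\lambda-\gamma\sqrt{\gamma^2+4\alpha\sigma^2\lambda}$. Both $\gamma^2+\alpha\sigma^2\lambda$ and $\gamma\sqrt{\gamma^2+4\alpha\sigma^2\lambda}$ are nonnegative, so $N$ has the same sign as the difference of their squares, which a one-line computation reduces to $\alpha\sigma^2\lambda\,(\alpha\sigma^2\lambda-2\gamma^2)$; as $\alpha\sigma^2\lambda>0$, this is precisely the sign of $\alpha\sigma^2\lambda-2\gamma^2$. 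Combining the three equivalences gives the corollary. I expect no serious obstacle here: the only points requiring care are getting the direction of the inequality right (it is easy to interchange liquidity provision and predation) and justifying the squaring step, which is legitimate because $\gamma^2+\alpha\sigma^2\lambda>0$; this argument also handles the boundary case $\gamma=0$, where $2\gamma^2=0$ and one always obtains liquidity provision.
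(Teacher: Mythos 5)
Your proof is correct, and it reaches the paper's conclusion by a partly different route. Both you and the paper begin identically: for $x_i=0$ the formula~\eqref{Nash eq exist thm infinite horizon strategy} gives $X^*_i(t)=\bbar x_n\bigl(e^{\rho_-t}-e^{\theta_-t}\bigr)$, so the sign of $X^*_i(t)$ for $t>0$ is the sign of $\rho_--\theta_-$. The divergence is in how that sign is determined. The paper substitutes the closed-form expressions from~\eqref{wh rho-} and~\eqref{thetarho+-}, writes $\rho_--\theta_-=\frac{\gamma}{2\lambda}\bigl(\frac{-2n}{n+1}+\sqrt{1+\xi}-\sqrt{(\frac{n-1}{n+1})^2+\frac{\xi}{n+1}}\bigr)$ with $\xi=4\alpha\sigma^2\lambda/\gamma^2$, and argues that this is strictly increasing in $\xi$ and vanishes at $\xi=8$. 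You instead avoid the radicals by exploiting that $\theta_-$ and $\rho_-$ are the negative roots of the quadratics $g(\tau)=\lambda\tau^2-\gamma\tau-\alpha\sigma^2$ and $h(\tau)=(n+1)\lambda\tau^2+(n-1)\gamma\tau-\alpha\sigma^2$, computing $h(\theta_-)=n(2\gamma\theta_-+\alpha\sigma^2)$ and reading off the order of the roots from the sign of $h$ at $\theta_-$; the remaining sign computation via squaring is routine and correctly justified. I checked the identity $h(\theta_-)=n(2\gamma\theta_-+\alpha\sigma^2)$ and the reduction of $(\gamma^2+\alpha\sigma^2\lambda)^2-\gamma^2(\gamma^2+4\alpha\sigma^2\lambda)$ to $\alpha\sigma^2\lambda(\alpha\sigma^2\lambda-2\gamma^2)$; both are right, and the direction of each equivalence is handled correctly. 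Your argument is arguably cleaner and, as you note, covers the degenerate case $\gamma=0$ without modification, whereas the paper's normalization by $\gamma^2$ implicitly assumes $\gamma>0$ (the case $\gamma=0$ being immediate anyway). What the paper's approach buys in exchange is the additional observation that $\rho_--\theta_-$ is strictly monotone in $\xi$, which is slightly more information than the corollary requires.
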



Finally, we briefly discuss the behavior of equilibrium strategies as a function of the number $n$ of agents active in the market. \citet{Maug} discuss the following  two hypotheses and analyze their validity for a large data set of block executions by large insiders:\\
{\bf Hypothesis 1:} \lq\lq Trade duration decreases if several insiders compete for exploiting the same long-lived information."\\
{\bf Hypothesis 2:} \lq\lq Trade duration increases if several insiders trade simultaneously in the same direction for liquidity reasons."\\
In the situation of part~\ref{Nash eq exist thm infinite horizon (a)} of our Theorem~\ref{Nash eq exist thm infinite horizon},  the effective trade duration can be both increasing or decreasing in $n$,  or even lack monotonicity entirely; see Figure~\ref{n gamma fig}.  Here, the effective trade duration is defined as the time until a certain high percentage of the initial inventory has been liquidated. So both hypotheses from  \citet{Maug}  are compatible with risk-averse agents in an Almgren--Chriss setting.

\begin{figure}[h]
\centering
\centering\includegraphics[width=7.2cm]{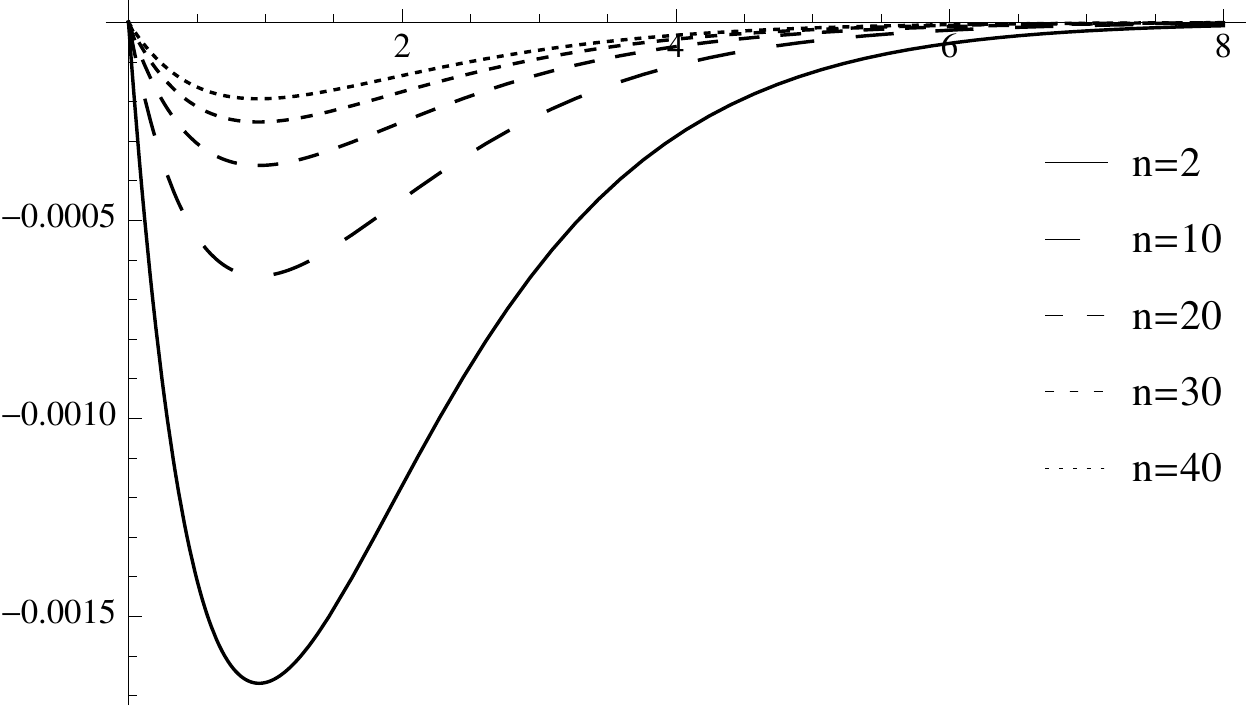}\qquad\qquad\qquad
\centering\includegraphics[width=7.2cm]{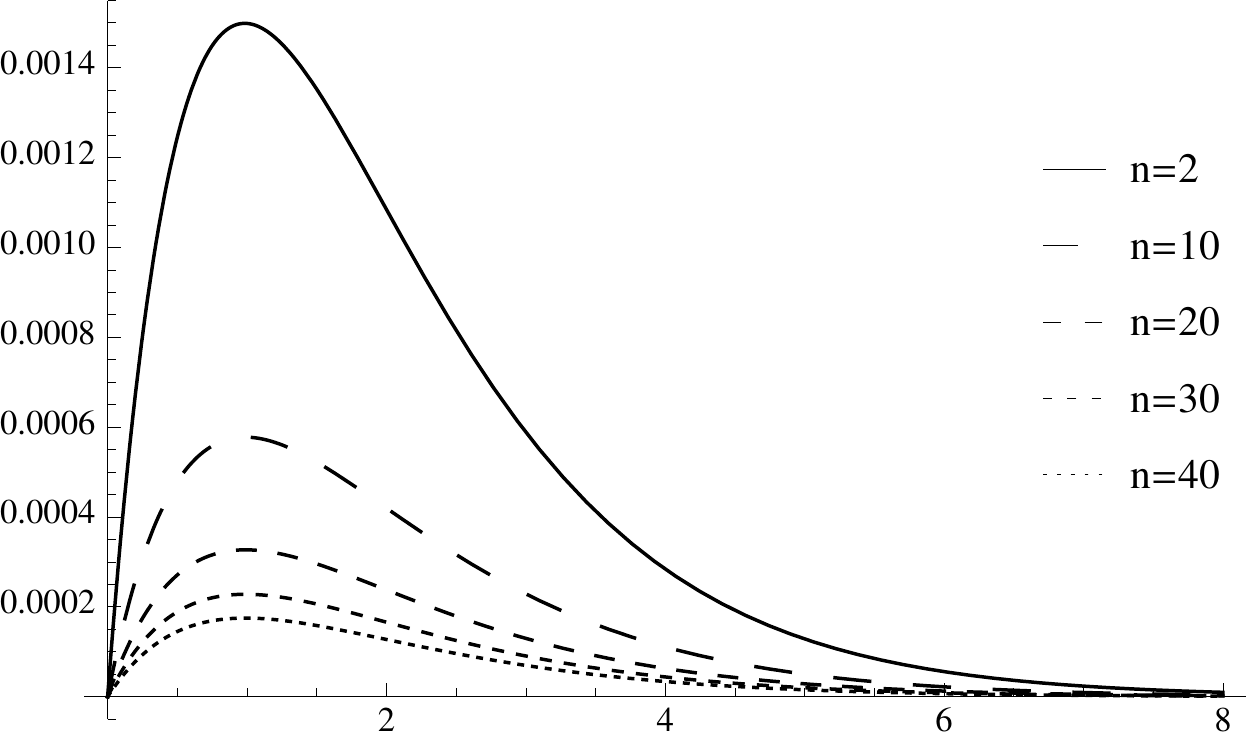}
\caption{Strategies $X^*_i(t)$ for $\lambda=0.15$ (left) and $\lambda=0.16$ (right) for various choices of $n$ and for $x_i=0$, $\sum_{j=1}^nx_j=1$, $\gamma=0.16$, and $\alpha\sigma^2=0.33$.}\label{PredatoryVSLiquidity Fig}
\end{figure}

\begin{figure}[h]
\centering
\centering\includegraphics[width=7.2cm]{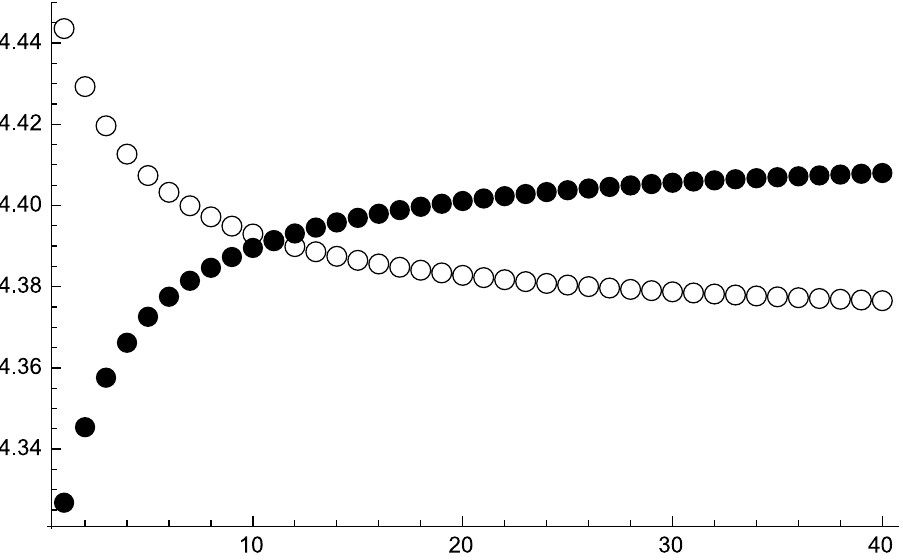}\qquad\qquad\qquad
\centering\includegraphics[width=7.2cm]{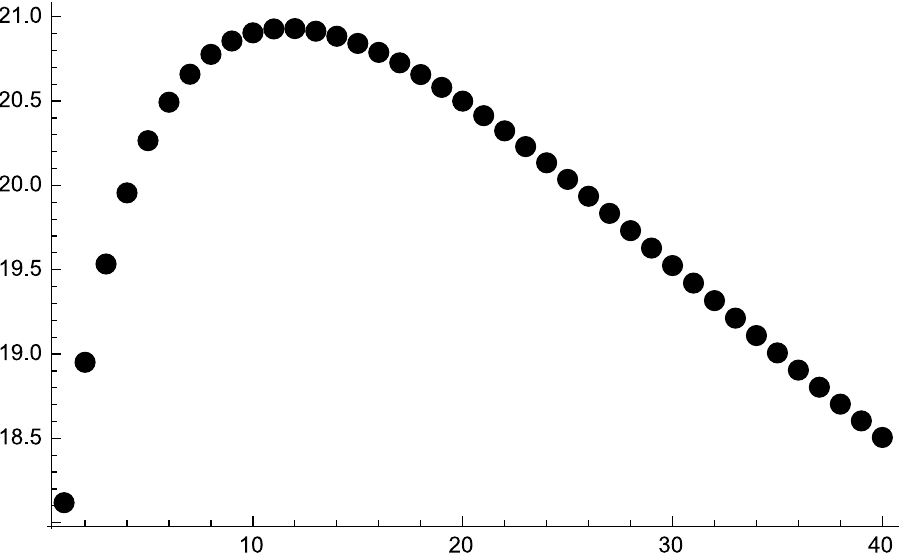}
\caption{Effective liquidation time of $X^*_1$, defined as the time until 99\% of the initial inventory $x_1$ have been liquidated, plotted  as a function of $n\in\{1,\dots,40\}$. In the left-hand panel, we we see that monotonicity in $n$ can be reversed by a very small change in $\gamma$; we took $\gamma=0.155$ (circles) and $\gamma=0.16$ (bullets) with $x_1=1$, $\sum_{i=1}^nx_i=3.5$, $\lambda=0.15$, and $\alpha\sigma^2=0.33$. In the right-hand panel, we observe that the effective liquidation time need not be monotone in $n$; here we chose $x_1=5$, $\sum_{i=1}^nx_i=10$, $\lambda=2$, $\gamma=0.1$,  and $\alpha\sigma^2=0.33$.}\label{n gamma fig}
\end{figure}

\section{Proofs}\label{Proofs Section}

\subsection{Proofs for a finite time horizon}

Let admissible strategies $X_i\in\cX(x_i,T)$ be given and write $\bm X_{-i}:=\{X_1,\dots,X_{i-1},X_{i+1},\dots,X_n\}$ for $i=1,\dots, n$. 
For $Y\in\cX(y,T)$,
we note first that, after integrating by parts, 
\begin{align*}
\cR(Y|\bm X_{-i})&= yS_0-\frac\gamma2y^2+\int_0^TY(t)\Big(b(t)+\gamma\sum_{j\neq i}\dot X_j(t)\Big)\,dt\\
&\qquad-\lambda\sum_{j\neq i}^n\int_0^T\dot Y(t)\dot X_j(t)\,dt-\lambda\int_0^T\dot Y(t)^2\,dt+\sigma\int_0^TY(t)\,dW(t).
\end{align*}
When all $X_i$ and $Y$ are deterministic, it follows that 
\begin{align}\label{mean-variance = action functional eq}
\bE[\,\cR(Y|\bm X_{-i})\,]
-\frac{\alpha_i}2\,\var(\cR(Y|\bm X_{-i}))=c+\int_0^T\cL^i(t,Y(t),\dot Y(t)|\bm X_{-i})\,dt,
\end{align}
where $c=yS_0-\frac\gamma2y^2$ and the Lagrangian $\cL^i$ is given by
\begin{align}\label{Lagrangian}
\cL^i(t,q,p|\bm X_{-i})=q\Big(b(t)+\gamma\sum_{j\neq i}\dot X_j(t)\Big)-\frac{{\alpha_i}\sigma^2}2q^2-\lambda p\Big(\sum_{j\neq i}\dot X_j(t)+ p\Big).
\end{align}


\begin{lemma}\label{Equilibrium uniqueness lemma}In the context of Theorem~\ref{Nash Eq exist thm}, there exists at most one Nash equilibrium for mean-variance optimization.
\end{lemma}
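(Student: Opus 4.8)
The plan is to exploit the strict concavity of each player's objective in their own control together with the affine structure of the admissible class $\cX_{\text{det}}(x_i,T)$. By \eqref{mean-variance = action functional eq} and \eqref{Lagrangian}, for deterministic competitors $\bm X_{-i}$ the mean-variance functional of player $i$ equals a constant depending only on $x_i$ plus the action integral $\int_0^T\cL^i(t,Y(t),\dot Y(t)\,|\,\bm X_{-i})\,dt$, and $\cL^i$ is concave in $(q,p)$ with $\partial^2_{pp}\cL^i=-2\lambda<0$. Since any affine combination of two admissible deterministic strategies sharing the endpoints $x_i$ at $0$ and $0$ at $T$ is again admissible, the admissible class is a real affine space, so maximality of a Nash component $X^*_i$ is equivalent to the first-order condition that the Gateaux derivative of the functional vanish in every direction $h$ with $h(0)=h(T)=0$. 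Note that only this first-order condition is needed; existence of an equilibrium is not assumed here.

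Now suppose $(X^*_1,\dots,X^*_n)$ and $(Y^*_1,\dots,Y^*_n)$ are two Nash equilibria for mean-variance optimization, and set $h_i:=Y^*_i-X^*_i$. Each $h_i$ is absolutely continuous with $\dot h_i\in L^2$ and satisfies $h_i(0)=h_i(T)=0$, so it is an admissible test direction for player $i$ at \emph{both} equilibria. I would write out the first-order condition $\int_0^T\big(\partial_q\cL^i\cdot h_i+\partial_p\cL^i\cdot\dot h_i\big)\,dt=0$ once evaluated at $(X^*_j)_j$ and once at $(Y^*_j)_j$, and subtract the two. The drift $b$ cancels, and using $X^*_j-Y^*_j=-h_j$ one is left, for each $i$, with
\begin{equation*}
\int_0^T\Big(\alpha_i\sigma^2h_i^2+2\lambda\dot h_i^2+\lambda\dot h_i\sum_{j\neq i}\dot h_j-\gamma h_i\sum_{j\neq i}\dot h_j\Big)\,dt=0.
\end{equation*}

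The crux is to sum these $n$ identities and to see that the cross terms are harmless. Writing $S:=\sum_i h_i$, the temporary-impact cross term collapses to $\sum_i\big(2\lambda\dot h_i^2+\lambda\dot h_i\sum_{j\neq i}\dot h_j\big)=\lambda\sum_i\dot h_i^2+\lambda\dot S^2\ge0$, while the permanent-impact cross term integrates to zero: since $\sum_{i\neq j}h_i\dot h_j=S\dot S-\sum_i h_i\dot h_i$, both $\int_0^T S\dot S\,dt=\tfrac12[S^2]_0^T$ and $\int_0^T h_i\dot h_i\,dt=\tfrac12[h_i^2]_0^T$ vanish because of the common boundary conditions $h_i(0)=h_i(T)=0$. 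This is the single step that genuinely uses the state constraint $X_i(T)=0$, and I expect it to require the most care, since the cancellation of the $\gamma$-term is exactly what rescues the argument even though $\cL^i$ is only concave (not strictly so) in $q$. The summed identity then reduces to $\int_0^T\big(\sum_i\alpha_i\sigma^2h_i^2+\lambda\sum_i\dot h_i^2+\lambda\dot S^2\big)\,dt=0$, an integral of a nonnegative integrand. As $\lambda>0$, this forces $\dot h_i\equiv0$ for every $i$, and then $h_i(0)=0$ gives $h_i\equiv0$, i.e. $Y^*_i=X^*_i$ for all $i$; hence at most one Nash equilibrium exists.
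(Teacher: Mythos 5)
Your proof is correct and follows essentially the same route as the paper: the paper packages the two first-order conditions into the one-sided derivative of an auxiliary function $f(\beta)$ along the segment joining the two equilibria, but the resulting quantity is exactly your summed identity $\int_0^T\big(\sum_i\alpha_i\sigma^2h_i^2+\lambda\sum_i\dot h_i^2+\lambda\dot S^2\big)\,dt$, with the $\gamma$-cross-terms killed by the same integration by parts against the boundary conditions. No substantive difference.
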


\begin{proof}We assume by way of contradiction that $X^0_1,\dots,X^0_n$ and $X^1_0,\dots, X^1_n$ are two distinct Nash equilibria with $X^k_i\in\cX(x_i,T)$ for $i=1,\dots, n$ and $k=0,1$. For $\beta\in[0,1]$, let $X^\beta_i:=\beta X^1_i+(1-\beta)X^0_i$ and define 
$$f(\beta):=\sum_{i=1}^n\int_0^T\Big(\cL^i(t,X_i^\beta(t),\dot X_i^\beta(t)|{\bm X}^0_{-i})+\cL^i(t,X_i^{1-\beta}(t),\dot X_i^{1-\beta}(t)|{\bm X}^1_{-i})\Big)\,dt.
$$
By assumption, the strategy $X_i^k$ maximizes the functional $Y\mapsto 
\int_0^T\cL^i(t,Y(t),\dot Y(t)|{\bm X}^k_{-i})\,dt$ within the class $\cX_{\text{det}}(x_i,T)$  for $k=0,1$. We therefore must have $f(\beta)\le f(0)$ for $\beta>0$, which implies that
\begin{align}\label{f'(0) contradiction eq}
\frac d{d\beta}\Big|_{\beta=0+}f(\beta)\le0.
\end{align}
On the other hand, by  interchanging differentiation and integration, which is permitted due to our assumptions on admissible strategies and due to the linear-quadratic form of the Lagrangian, a short computation shows that 
\begin{align*}
\lefteqn{\frac d{d\beta}\Big|_{\beta=0+}f(\beta)}\\
&=\sum_{i=1}^n\int_0^T\bigg[\gamma (X^1_i(t)-X^0_i(t))\sum_{j=1}^n(\dot X_j^0(t)-\dot X_j^1(t))-\gamma (X^1_i(t)-X^0_i(t))(\dot X_i^0(t)-\dot X^1_i(t))\\
&\quad+{\alpha_i}\sigma^2 (X^1_i(t)-X^0_i(t))^2+\lambda(\dot X^1_i(t)-\dot X^0_i(t))\sum_{j=1}^n(\dot X^1_j(t)-\dot X_j^0(t)) +\lambda (\dot X^0_i(t)-\dot X^1_i(t)) ^2\bigg]\,dt.\end{align*}
We note next that 
$$\int_0^T(X^1_i(t)-X^0_i(t))(\dot X^0_i(t)-\dot X^1_i(t))\,dt=\frac12(X^1_i(T)-X^0_i(T))^2-\frac12(X^1_i(0)-X^0_i(0))^2=0.
$$
Moreover, by the same argument,
$$\int_0^T(X^1_i(t)-X^0_i(t))(\dot X^0_j(t)-\dot X^1_j(t))\,dt=-\int_0^T(X^1_j(t)-X^0_j(t))(\dot X^0_i(t)-\dot X^1_i(t))\,dt,
$$
and hence
$$\sum_{i=1}^n\sum_{j=1}^n\int_0^T(X^1_i(t)-X^0_i(t))(\dot X^0_j(t)-\dot X^1_j(t))\,dt=0.
$$
It follows that 
\begin{align*}
{\frac d{d\beta}\Big|_{\beta=0+}f(\beta)}=\int_0^T\bigg[{\alpha_i}\sigma^2\sum_{i=1}^n(X^1_i(t)-X^0_i(t))^2+\lambda\sum_{i=1}^n (\dot X^0_i(t)-\dot X^1_i(t)) ^2+\lambda \Big(\sum_{i=1}^n(\dot X^0_i(t)-\dot X^1_i(t))\Big)^2\bigg]\,dt,
\end{align*}
which is strictly positive because the two Nash equilibria $X^0_1,\dots,X^0_n$ and $X^1_0,\dots, X^1_n$ are distinct. But strict positivity contradicts~\eqref{f'(0) contradiction eq}. 
\end{proof}


\begin{lemma}\label{one-agent optimization problem lemma}For $i=1,\dots, n$ there exists at most one maximizer in $\cX_{\text{det}}(y,T)$ of the functional $Y\mapsto 
\int_0^T\cL^i(t,Y(t),\dot Y(t)|\bm X_{-i})\,dt$. If, moreover, $X_1,\dots, X_n\in C^2[0,T]$, then there exists a unique maximizer   $Y^*\in\cX_{\text{det}}(y,T)\cap C^2[0,T]$, which is given as the unique solution of the two-point boundary value problem
$$\left\{
\begin{array}{l} {\alpha_i}\sigma^2Y(t)
-2\lambda\ddot Y(t)=b(t)+\gamma\sum_{j\neq i}\dot X_j(t)+\lambda \sum_{j\neq i}\ddot X_j(t),\\
Y(0)=y,\ 
Y(T)=0.
\end{array}
\right.
$$
\end{lemma}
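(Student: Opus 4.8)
The plan is to exploit that, by~\eqref{Lagrangian}, the functional
\[
J(Y):=\int_0^T\cL^i(t,Y(t),\dot Y(t)|\bm X_{-i})\,dt
\]
is quadratic in $Y$ with a strictly concave leading part. Writing $a(t):=b(t)+\gamma\sum_{j\neq i}\dot X_j(t)$ and $c(t):=\sum_{j\neq i}\dot X_j(t)$, one has $J=L-Q$ with the affine term $L(Y)=\int_0^T(a\,Y-\lambda c\,\dot Y)\,dt$ and the quadratic form
\[
Q(Y)=\int_0^T\Big(\tfrac{\alpha_i\sigma^2}2\,Y(t)^2+\lambda\,\dot Y(t)^2\Big)\,dt
\]
of a positive semidefinite symmetric bilinear form $B$, $Q(Y)=B(Y,Y)$.

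First I would prove the uniqueness claim for arbitrary admissible strategies. The set $\cX_{\text{det}}(y,T)$ is convex, and $Q(D)>0$ for every admissible difference $D$ with $D(0)=D(T)=0$ and $D\not\equiv0$: since $\lambda>0$ and $D$ is absolutely continuous with $D(0)=0$, the vanishing of $\int_0^T\dot D(t)^2\,dt$ would force $D\equiv0$. Expanding $Q$ along a segment then gives, for $Y_0,Y_1\in\cX_{\text{det}}(y,T)$ and $\beta\in[0,1]$, the exact identity $J(\beta Y_1+(1-\beta)Y_0)=\beta J(Y_1)+(1-\beta)J(Y_0)+\beta(1-\beta)Q(Y_1-Y_0)$. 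Hence $J$ is strictly concave on $\cX_{\text{det}}(y,T)$ and admits at most one maximizer.

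Next I would establish existence under $X_1,\dots,X_n\in C^2[0,T]$ by solving the stated boundary value problem directly. Put $g(t):=b(t)+\gamma\sum_{j\neq i}\dot X_j(t)+\lambda\sum_{j\neq i}\ddot X_j(t)$, which is continuous by the $C^2$-hypothesis; the problem is then the linear equation $\ddot Y=\tfrac{\alpha_i\sigma^2}{2\lambda}Y-\tfrac1{2\lambda}g$ with $Y(0)=y$, $Y(T)=0$. The associated homogeneous two-point problem has only the trivial solution---for $\alpha_i>0$ because $\sinh(\kappa T)\neq0$ with $\kappa=\sqrt{\alpha_i\sigma^2/(2\lambda)}$, and for $\alpha_i=0$ because an affine function vanishing at $0$ and $T$ is identically zero---so the Fredholm alternative yields a unique solution $Y^*$. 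Since $g$ is continuous, $Y^*\in C^2[0,T]$, and it clearly belongs to $\cX_{\text{det}}(y,T)$.

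Finally I would verify that $Y^*$ is the maximizer, which by the uniqueness step identifies it as the unique one. For $Y\in\cX_{\text{det}}(y,T)$ set $D:=Y-Y^*$, so $D(0)=D(T)=0$, and use the exact expansion $J(Y^*+D)=J(Y^*)+\big(L(D)-2B(Y^*,D)\big)-Q(D)$. Integrating the $\dot D$-contributions of the first-order term by parts---legitimate because $D$ is absolutely continuous with vanishing endpoints while $\lambda(c+2\dot Y^*)\in C^1[0,T]$---rewrites it as $\int_0^T\big(g-\alpha_i\sigma^2Y^*+2\lambda\ddot Y^*\big)D\,dt$, which vanishes precisely because $Y^*$ solves the ODE. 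Thus $J(Y)-J(Y^*)=-Q(D)\le0$, with equality only when $D\equiv0$. The main delicate point is that, because admissible competitors are merely absolutely continuous, sufficiency cannot be read off from a local (pointwise) second-order condition; the argument must instead rest on the global quadratic structure encoded in $Q$ and on treating the cases $\alpha_i>0$ and $\alpha_i=0$ together in the solvability of the boundary value problem.
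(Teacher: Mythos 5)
Your argument is correct and follows the same calculus-of-variations route as the paper: uniqueness from concavity of the functional on the convex set $\cX_{\text{det}}(y,T)$, existence by solving the linear Euler--Lagrange two-point boundary value problem, and sufficiency by a global expansion around $Y^*$ in which the first-order term vanishes after integration by parts thanks to $D(0)=D(T)=0$. The only substantive difference is that your version is more careful at the degenerate risk-neutral case $\alpha_i=0$, in two respects. First, the paper deduces uniqueness from ``strict concavity of the Lagrangian $\cL^i$,'' which is literally false when $\alpha_i=0$ (the Lagrangian in~\eqref{Lagrangian} is then only affine in $q$); your observation that $Q(D)\ge\lambda\int_0^T\dot D(t)^2\,dt>0$ for any nonzero admissible difference $D$ with $D(0)=D(T)=0$ is the correct repair, and it is exactly the mechanism the paper itself uses in the proof of Lemma~\ref{Equilibrium uniqueness lemma}. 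Second, the paper's explicit variation-of-parameters representation of the general solution involves the factor $\frac1{4\lambda\kappa_i}$ with $\kappa_i=\sqrt{\alpha_i\sigma^2/2\lambda}$ and hence degenerates when $\alpha_i=0$, whereas your solvability argument (trivial kernel of the homogeneous two-point problem, for $\alpha_i>0$ via $\sinh(\kappa_i T)\neq0$ and for $\alpha_i=0$ via affine functions) covers both cases uniformly. These are refinements of the same proof rather than a different one, but they do close small gaps left implicit in the paper.
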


\begin{proof}
It follows from the strict concavity of the Lagrangian $\cL^i$ and the convexity of  the set $\cX_{\text{det}}(y,T)$  that there can be at most one maximizer in $\cX_{\text{det}}(y,T)$. 

Now, we show the existence of a maximizer under the additional assumption $X_1,\dots, X_n\in C^2[0,T]$.  Under this assumption, we may formulate the Euler--Lagrange equation  $\cL^i_q(t,Y(t),\dot Y(t)|\bm X_{-i})=\frac d{dt}\cL^i_p(t,Y(t),\dot Y(t)|\bm X_{-i})$, which for our specific Lagrangian becomes:
\begin{align}\label{first Euler Lagrange eqn}
{\alpha_i}\sigma^2Y(t)
-2\lambda\ddot Y(t)=b(t)+\gamma\sum_{j\neq i}\dot X_j(t)+\lambda \sum_{j\neq i}\ddot X_j(t).
\end{align}
 Denoting the right-hand side of~\eqref{first Euler Lagrange eqn} by $u(t)$,  the general solution of this second-order ODE is of the form
\begin{equation*}
 Y(t)=c_1 e^{-\kappa_i t}+c_2e^{\kappa_i t}-\frac1{4\lambda\kappa_i}\int_0^te^{\kappa_i(t-s)}u(s)\,ds+\frac1{4\lambda\kappa_i}\int_0^te^{-\kappa_i(t-s)}u(s)\,ds,
\end{equation*}
where $c_1$ and $c_2$ are constants and
$
 \kappa_i=\sqrt{{\alpha_i\sigma^2}/{2\lambda}}$.
It is clear that the two constants $c_1$ and $c_2$ can be uniquely determined by imposing the boundary conditions $Y(0)=y$ and $Y(T)=0$. From now on, let $Y^*\in\cX_{\text{det}}(y,T)\cap C^2[0,T]$ denote the corresponding solution. We will now verify that $Y^*$ is indeed a maximizer of our problem. To this end, let $Y\in\cX_{\text{det}}(y,T)$ be arbitrary. Using first the concavity of  $(q,p)\mapsto \cL^i(t,q,p|\bm X_{-i})$ and then the fact that $Y^*$ solves the Euler--Lagrange equation, we get 
\begin{align*}
\lefteqn{\cL^i(t,Y^*(t),\dot Y^*(t)|\bm X_{-i}))-\cL^i(t,Y(t),\dot Y(t)|\bm X_{-i})}\\
&\ge \cL^i_q(t,Y^*(t),\dot Y^*(t)|\bm X_{-i}))(Y^*(t)-Y(t))+ \cL^i_p(t,Y^*(t),\dot Y^*(t)|\bm X_{-i}))(\dot Y^*(t)-\dot Y(t))\\
&=\Big(\frac d{dt}\cL^i_p(t,Y^*(t), \dot Y^*(t)|\bm X_{-i})\Big)(Y^*(t)-Y(t))+ \cL^i_p(t,Y^*(t),\dot Y^*(t)|\bm X_{-i}))(\dot Y^*(t)-\dot Y(t))\\
&=\frac d{dt}\Big(\cL^i_p(t,Y^*(t),\dot Y^*(t)|\bm X_{-i})(Y^*(t)-Y(t))\Big).
\end{align*}
Therefore, 
\begin{align*}\lefteqn{\int_0^T\cL^i(t,Y^*(t),\dot Y^*(t)|\bm X_{-i})\,dt-\int_0^T\cL^i(t,Y(t),\dot Y(t)|\bm X_{-i})\,dt}\\
&\ge \int_0^T\frac d{dt}\Big(\cL^i_p(t,Y^*(t),\dot Y^*(t)|\bm X_{-i})(Y^*(t)-Y(t))\Big)\,dt= 0,
\end{align*}
where in the final step we have used that $Y^*(0)=Y(0)$ and $Y^*(T)=Y(T)$.  
This proves the lemma.
\end{proof}


\begin{proof}[Proof of Theorem~\ref{Nash Eq exist thm}] According to Lemma~\ref{Equilibrium uniqueness lemma}, there exists at most one Nash equilibrium. We will now show that there exists a Nash equilibrium $X^*_1,\dots,X^*_n$ such that each strategy $X^*_i$ belongs to $\cX_{\text{det}}(x_i,T)\cap C^2[0,T]$. By Lemma~\ref{one-agent optimization problem lemma}, each strategy $X^*_i$ must then be a solution of the second-order differential equation 
\begin{equation}\label{ode system 1}
 {\alpha_i}\sigma^2X_i(t)
-2\lambda\ddot X_i(t)=b(t)+\gamma\sum_{j\neq i}\dot X^*_j(t)+\lambda \sum_{j\neq i}\ddot X^*_j(t), \end{equation}
with boundary conditions 
\begin{align}\label{ode system 1 boundary conditions}
\text{$X_i(0)=x_i$ and $X_i(T)=0$.}
\end{align}
 We can clearly combine the $n$ differential equations~\eqref{ode system 1} into a system of $n$ coupled second-order linear ordinary differential equations for the vector $\bm X^*:=(X^*_1,\dots,X^*_n)^\top$. 
 It follows again from Lemma~\ref{one-agent optimization problem lemma} that every $C^2$-solution of the system~\eqref{ode system 1},~\eqref{ode system 1 boundary conditions}
 is a Nash equilibrium. Therefore the assertion of the theorem will follow if we can  show the existence of a $C^2$-solution to the $n$-dimensional two-point boundary value problem~\eqref{ode system 1},~\eqref{ode system 1 boundary conditions}.

 By introducing the auxiliary function $\bm Y(t)$ for the derivative $ \dot {\bm X}(t)$, by letting $\bm b(t)$ be the vector with all components equal to $b(t)$, and  by defining the $n\times n$ matrices $A:=\sigma^2\text{diag} (\alpha_1,\dots,\alpha_n)$, the identity matrix $I=\text{diag}(1,\dots,1)$, and the matrix $J$ with all entries equal to one, the system~\eqref{ode system 1} can be re-written as follows:
 \begin{align}\label{ODE system 2}
\begin{pmatrix}A&-\gamma(J-I)\\
 0&I
 \end{pmatrix}\begin{pmatrix}\bm X(t)\\\bm Y(t) \end{pmatrix}-\begin{pmatrix}0&\lambda(J+I)\\ I&0 \end{pmatrix}
\begin{pmatrix}\dot{\bm X}(t)\\\dot{\bm Y}(t) \end{pmatrix}
=\begin{pmatrix} \bm b(t)\\\bm 0\end{pmatrix}.
\end{align} 
Clearly, $\bm X$ is a  $C^2$-solution of~\eqref{ode system 1} if and only if ${\bm X\choose \dot{\bm X}}$  is a $C^1$-solution of~\eqref{ODE system 2}. In particular, every $C^1$-solution of~\eqref{ODE system 2} with boundary conditions~\eqref{ode system 1 boundary conditions} yields a Nash equilibrium.

Now consider the homogeneous system~\eqref{ODE system 2},~\eqref{ode system 1 boundary conditions} with $\bm b(t)=0$ and initial values $x_1=\cdots=x_n=0$. 
The corresponding boundary condition can be written as 
\begin{align}\label{ODE system 2 boundary cond}
(\bm X(0),\bm Y(0),\bm X(T),\bm Y(T))^\top\in V,
\end{align}
where $V\subset\bR^{4n}$ is the $2n$-dimensional linear space
$$V=\big\{(\bm x_0,\bm y_0,\bm x_1,\bm y_1)^\top \in\bR^{4n}\,|\,\bm x_0=\bm x_1=\bm0\big\}.
$$ 
It is clear that ${\bm X\choose {\bm Y}}={\bm 0\choose\bm 0}$ is a solution. In fact, this trivial solution is the only solution as every solution must be a Nash equilibrium, and Nash equilibria are unique by Lemma~\ref{Equilibrium uniqueness lemma}. It therefore follows from the general theory of linear boundary value problems for systems of ordinary differential equations that the two-point boundary value problem~\eqref{ODE system 2},~\eqref{ODE system 2 boundary cond} has a unique $C^1$-solution for every continuous $\bm b:[0,T]\to\bR^n$ (and in fact for every continuous $\bR^{2n}$-valued function substituting ${\bm b(t)\choose\bm0}$ on the right-hand side of~\eqref{ODE system 2}); see \citet[(9.22), p. 189]{Kurzweil}. Using this fact, we let  ${\bm X^0\choose {\bm Y}^0}$ be the solution of~\eqref{ODE system 2},~\eqref{ODE system 2 boundary cond} when $\bm b(t)$ in~\eqref{ODE system 2} is replaced by $\bm b^0(t)=(b_1^0(t),\dots,b^0_n(t))$ for
$$b^0_i(t)=b(t)+\frac{T-t}T\alpha_i\sigma^2x_i+\frac\gamma T\sum_{j\neq i}x_j.
$$
One then  checks that 
$$X_i^*(t):=X^0_i(t)+\frac{T-t}Tx_i,\qquad i=1,\dots, n,
$$
solves~\eqref{ode system 1},~\eqref{ode system 1 boundary conditions} and is thus the desired Nash equilibrium.
\end{proof}


\begin{remark}\label{matrix rm}For $A:=\sigma^2\text{diag} (\alpha_1,\dots,\alpha_n)$, the identity matrix $I$, and the matrix $J$ with all entries equal to one, let
$$M :=\begin{pmatrix}0&\lambda(J+I)\\ I&0 \end{pmatrix}^{-1}\begin{pmatrix}A&-\gamma(J-I)\\
 0&I
\end{pmatrix}.
$$
and 
$$\bm f(t):=-\begin{pmatrix}0&\lambda(J+I)\\ I&0 \end{pmatrix}^{-1}\begin{pmatrix} \bm b(t)\\\bm 0\end{pmatrix}.
$$ 
With this notation and $\bm Z(t):=(\bm X(t),\bm Y(t))^\top$, the system~\eqref{ODE system 2} can now be written as 
\begin{equation}\label{ODE formula}
\dot{\bm Z}(t)=M\bm Z(t)+\bm f(t).
\end{equation}
 Note that $J^2=nJ$ and that hence $(J+I)(I-\frac1{n+1}J)=I=(I-\frac1{n+1}J)(J+I)$. It follows that 
$$\begin{pmatrix}0&\lambda(J+I)\\ I&0 \end{pmatrix}^{-1}=\begin{pmatrix}0&I\\\frac1\lambda(I-\frac1{n+1}J)&0\end{pmatrix},
$$
and hence that
\begin{equation}\label{matrix M}M=\begin{pmatrix}0&I\\
\frac1\lambda(A-\frac1{n+1}JA)&\frac\gamma\lambda(I-\frac2{n+1}J)\end{pmatrix}.
\end{equation}
\end{remark}


\begin{proof}[Proof of Corollary~\ref{CARA Nash Eq exist cor}] Let $X^*_1,\dots,X^*_n$ be the unique Nash equilibrium for mean-variance optimization as constructed in Theorem~\ref{Nash Eq exist thm}. When ${\bm X}^*_{-i}=\{X^*_1,\dots,X^*_{i-1},X^*_{i+1},\dots,X^*_n\}$ is fixed, the $i^{\text{th}}$ agent perceives 
\begin{align*}
S^{{\bm X}^*_{-i}}(t):=S^0(t)+\gamma\sum_{j\neq i}(X_j(t)-X_j(0))+\lambda\sum_{j\neq i}\dot X_j(t),\qquad t\in[0,T],
\end{align*}
as \lq\lq unaffected" price process. It is of the form 
$$S^{{\bm X}^*_{-i}}(t)=S_0+\sigma W(t)+\int_0^tb^i(s)\,ds
$$
for a deterministic and continuous function $b^i:[0,T]\to\bR$.  As the process $S^{{\bm X}^*_{-i}}$ has independent increments and $S^{{\bm X}^*_{-i}}_T$ has all exponential moments, i.e., $\bE\big[\,e^{\beta S^{{\bm X}^*_{-i}}_t}\,\big]<\infty$ for all $\beta\in\bR$ and $t\ge0$, it follows as in \citet[Theorem 2.1]{SchiedSchoenebornTehranchi} that for $\alpha_i>0$
$$\sup_{X\in\cX(x_i,T)}\bE[\,u_{\alpha_i}(\cR(X|\bm X_{-i}))\,]=\sup_{X\in\cX_{\text{det}}(x_i,T)}\bE[\,u_{\alpha_i}(\cR(X|\bm X_{-i}))\,].
$$
But for $\alpha_i>0$ and $X\in\cX_{\text{det}}(x_i,T)$ we have 
$$\bE[\,u_{\alpha_i}(\cR(X|\bm X_{-i}))\,]=\frac1{\alpha_i}\Big(1-e^{-\alpha_i\bE[\,\cR(X|\bm X_{-i})\,]
+\frac{\alpha_i^2}2\,\var(\cR(X|\bm X_{-i})}\Big),
$$
which shows that CARA utility maximization is equivalent to the maximization of the corresponding mean-variance functional. 
The corresponding result for $\alpha_i=0$ is obvious. \end{proof}


\begin{proof}[Proof of Corollary~\ref{drift cor}] Letting $\Sigma(t):=\sum_{j=1}^nX_j(t)$ and re-writing~\eqref{ode system 1 thm} yields 
 $$ {\alpha}\sigma^2X_i(t)+\gamma \dot X_i(t)
-\lambda\ddot X_i(t)=b(t)+\gamma\dot\Sigma(t)+\lambda \ddot \Sigma(t)
 $$
 and hence~\eqref{Xi b bvp}.
 Summing over $i$ then implies~\eqref{Sigma b bvp}.
\end{proof}


Now we prepare for the proof of Theorem~\ref{alpha n=alpha b=0 thm}.


\begin{lemma}\label{matrix M lemma}For $\alpha_1=\cdots=\alpha_n=\alpha>0$, the matrix $M$ from~\eqref{matrix M} has four real eigenvalues $\theta_+,\theta_-,\rho_+,\rho_-$ given by~\eqref{thetarho+-}.
Moreover, with $\bm 1\in\bR^n$ denoting the vector with all entries equal to $1$, the corresponding eigenspaces are given by 
\begin{align*}
E(\rho_\pm)=\text{\rm span}{\bm 1\choose \rho_\pm\bm1}  \qquad \text{and}\qquad E(\theta_\pm)=\bigg\{{\bm v\choose \theta_\pm\bm v}\,\Big|\,\bm v\in\bR^n,\ \bm v\perp\bm 1\bigg\}.
\end{align*}
\end{lemma}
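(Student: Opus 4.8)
The plan is to exploit the block structure of $M$ together with the spectral decomposition $\bR^n=\text{\rm span}(\bm1)\oplus\bm1^\perp$ induced by the rank-one matrix $J=\bm1\bm1^\top$. First I would specialize the expression \eqref{matrix M} to the case $\alpha_1=\cdots=\alpha_n=\alpha$, where $A=\alpha\sigma^2 I$ and hence $A-\frac1{n+1}JA=\alpha\sigma^2(I-\frac1{n+1}J)$, giving
$$
M=\begin{pmatrix}0&I\\ \frac{\alpha\sigma^2}{\lambda}\big(I-\frac1{n+1}J\big)&\frac{\gamma}{\lambda}\big(I-\frac2{n+1}J\big)\end{pmatrix}.
$$
The decisive observation is that $J\bm v=\bm0$ whenever $\bm v\perp\bm1$, while $J\bm1=n\bm1$; consequently both $\bm1^\perp$ and $\text{\rm span}(\bm1)$ are invariant under the lower blocks of $M$, and on each of these subspaces the relevant operators act as scalars.

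Next I would test the eigenvector ansatz $\binom{\bm v}{\mu\bm v}$ for a scalar $\mu$ and $\bm v\in\bR^n$. The top block of the equation $M\binom{\bm v}{\mu\bm v}=\mu\binom{\bm v}{\mu\bm v}$ holds automatically, and the bottom block reduces to the single vector identity $\frac{\alpha\sigma^2}{\lambda}(I-\frac1{n+1}J)\bm v+\mu\frac{\gamma}{\lambda}(I-\frac2{n+1}J)\bm v=\mu^2\bm v$. Taking $\bm v\perp\bm1$ (so $J\bm v=\bm0$) collapses this to the scalar quadratic $\lambda\mu^2-\gamma\mu-\alpha\sigma^2=0$, whose roots are exactly $\theta_\pm$ from \eqref{thetarho+-}; taking $\bm v=\bm1$ and using $(I-\frac1{n+1}J)\bm1=\frac1{n+1}\bm1$ and $(I-\frac2{n+1}J)\bm1=-\frac{n-1}{n+1}\bm1$ collapses it to $(n+1)\lambda\mu^2+(n-1)\gamma\mu-\alpha\sigma^2=0$, whose roots are $\rho_\pm$. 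This simultaneously identifies the four eigenvalues and verifies that every vector of the stated form is a genuine eigenvector, establishing the inclusions $E(\theta_\pm)\supseteq\{\binom{\bm v}{\theta_\pm\bm v}\mid\bm v\perp\bm1\}$ and $E(\rho_\pm)\supseteq\text{\rm span}\binom{\bm1}{\rho_\pm\bm1}$.

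Finally I would close the argument by a dimension count. The discriminants $\gamma^2+4\alpha\sigma^2\lambda$ and $(n-1)^2\gamma^2+4(n+1)\alpha\sigma^2\lambda$ in \eqref{wh rho-} are positive (nondegenerate case), so $\wh\theta,\wh\rho>0$, the four values $\theta_\pm,\rho_\pm$ are real, and $\theta_+\neq\theta_-$, $\rho_+\neq\rho_-$. The two $\theta$-eigenspaces each live in the $(n-1)$-dimensional space $\bm1^\perp$, while the two $\rho$-eigenspaces are one-dimensional and parallel to $\bm1$; the resulting $2(n-1)+2=2n$ eigenvectors are linearly independent and therefore form a basis of $\bR^{2n}$. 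Hence they exhaust the spectrum, the inclusions above are equalities, and $M$ is diagonalizable with exactly the asserted eigenvalues and eigenspaces. I do not anticipate a serious obstacle: the only point demanding care is the passage from $\supseteq$ to equality, which is why I would route it through the completeness/dimension count rather than a characteristic-polynomial computation, the reality of the eigenvalues being immediate from the nonnegativity of the two discriminants.
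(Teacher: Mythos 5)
Your argument is correct and follows essentially the same route as the paper: the same ansatz $\binom{\bm v}{\mu\bm v}$, the same splitting of $\bR^n$ into $\mathrm{span}(\bm 1)$ and $\bm 1^\perp$ via the action of $J$, the same two scalar quadratics yielding $\theta_\pm$ and $\rho_\pm$, and the same concluding observation that the resulting $2n$ eigenvectors span $\bR^{2n}$. Your closing dimension count is just a slightly more explicit version of the paper's one-line spanning remark, so there is nothing further to add.
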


\begin{proof}Let us write an arbitrary vector in $\bR^{2n}$ as ${\bm v_1\choose\bm v_2}$ for $\bm v_1,\bm v_2\in\bR^n$. By applying $M$ to ${\bm v_1\choose\bm v_2}$ we see that we must have $\bm v_2=\tau\bm v_1$ for ${\bm v_1\choose\bm v_2}$ to be an eigenvector with eigenvalue $\tau$. So let us consider vectors in $\bR^{2n}$ of the form ${\bm v\choose\tau\bm v}$ for $\bm v\in\bR^n$ and $\tau\in\bR$. The equation $M{\bm v\choose\tau\bm v}=\tau {\bm v\choose\tau\bm v}$ is equivalent to 
\begin{align}\label{eigenvalue equation}
\Big(\frac{\alpha\sigma^2}{\lambda}+\frac{\tau\gamma}{\lambda}\Big)\bm v-\frac{\alpha\sigma^2+2\tau\gamma}{{\lambda(n+1)}}J\bm v=\tau^2\bm v.
\end{align}
When $\bm v=\bm 1$, then $J\bm v=n\bm v$ and~\eqref{eigenvalue equation} becomes the quadratic equation 
\begin{align*}
\alpha\sigma^2+\gamma\tau-\frac{n(\alpha\sigma^2+2\gamma\tau)}{n+1}-\lambda\tau^2=0,
\end{align*}
which is solved for $\tau=\rho_+$ and $\tau=\rho_-$. When $\bm v\perp\bm1$, then $J\bm v=0$ and~\eqref{eigenvalue equation} becomes the quadratic equation 
\begin{align*}
\alpha\sigma^2+\gamma\tau-\lambda\tau^2=0,
\end{align*}
 which is solved for $\tau=\theta_+$ and $\tau=\theta_-$. As the eigenvectors found thus far span the entire space $\bR^{2n}$, the proof is complete.
\end{proof}


\begin{proof}[Proof of Theorem~\ref{alpha n=alpha b=0 thm}.]
It follows from Theorem~\ref{Nash Eq exist thm} and its proof that $X^*_1,\dots,X^*_n$ are obtained from the solutions of~\eqref{ODE formula} for $\bm f(t)=\bm0$. The general solution of this system is of the form $\bm Z(t)=e^{tM}\bm Z(0)$. By Lemma~\ref{matrix M lemma}, $M$ is diagonalizable and so every solution $\bm Z(t)$ must be a linear combination of exponential functions $e^{\tau t}$, where $\tau$ is an eigenvalue of $M$.  Another application of Lemma~\ref{matrix M lemma} thus implies that each $X^*_i$ can be represented as in~\eqref{linear comb rep}. One finally checks that for  $c_i(\theta_+),c_i(\theta_-),c(\rho_+),c(\rho_-)$ as in~\eqref{ci(rho pm)} the boundary conditions $X^*_i(0)=x_i$ and $X^*_i(T)=0$ are satisfied. That $\Sigma$ from~\eqref{Sigma general n eq} solves the two-point boundary problem~\eqref{Sigma b bvp} can be verified by a straightforward computation.\end{proof}


\begin{proof}[Proof of Corollary~\ref{n=2 Nash eq cor}] From~\eqref{Sigma general n eq} we have that  $\Sigma(t)=X_1^*(t)+X_2^*(t)$ is given by~\eqref{Sigma for n=2}. 
When letting $\Delta(t):=X^*_1(t)-X^*_2(t)$, we get from~\eqref{Xi b bvp} that $\Delta$ solves the two-point boundary value problem
$$\alpha\sigma^2\Delta(t)+\gamma\dot\Delta(t)-\lambda\ddot\Delta(t)=0,\qquad\Delta(0)=x_1-x_2,\ \Delta(T)=0.
$$
This boundary value problem is solved by~\eqref{Delta for n=2}. 
\end{proof}

\subsection{Proofs for an infinite time horizon}

\begin{lemma}\label{infinite horizon revenues lemma} For $X_i\in\cX(x_i,\infty)$ and $i=1,\dots, n$, the limit
$\cR(X_i|\bm X_{-i}):=-\lim_{T\ua\infty}\int_0^T\dot X_i(t)S^{X_1,\dots,X_n}(t)\,dt$
exists, is finite,  and is given by
\begin{equation*}\label{infinite horizon revenues formula}
\cR(X_i|\bm X_{-i})=x_iS_0-\frac\gamma2x_i^2+\sigma\int_0^\infty  \!\!X_i(t)\,dW(t)+\gamma\sum_{j\neq i}\int_0^{\infty}  \!\!X_i(t)\dot X_j(t)\,dt-\lambda\sum_{j=1}^n\int_0^{\infty} \!\!\dot X_i(t)\dot X_j(t)\,dt.\end{equation*}
\end{lemma}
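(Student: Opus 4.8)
The plan is to evaluate the finite-horizon integral $-\int_0^T\dot X_i(t)S^{X_1,\dots,X_n}(t)\,dt$ in closed form and then pass to the limit $T\ua\infty$ term by term. Substituting the decomposition
$$
S^{X_1,\dots,X_n}(t)=S_0+\sigma W(t)+\gamma\sum_{j=1}^n\big(X_j(t)-x_j\big)+\lambda\sum_{j=1}^n\dot X_j(t)
$$
splits the integral into four groups, which I would treat separately. As a preliminary I would record that admissibility forces $X_i(T)\to0$ $\bP$-a.s.: this is immediate from $\lim_{t\ua\infty}(X_i(t))^2\,t\log\log t=0$ together with $t\log\log t\to\infty$. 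The constant-level group is elementary: $-\int_0^T\dot X_i(t)S_0\,dt=-S_0\big(X_i(T)-x_i\big)\to x_iS_0$.

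For the Brownian group I would use that $X_i$ is absolutely continuous, hence of finite variation, so the (pathwise, or Itô) product rule for $X_i(t)W(t)$ has no cross-variation term and yields
$$
\int_0^T\dot X_i(t)W(t)\,dt=X_i(T)W(T)-\int_0^TX_i(t)\,dW(t).
$$
The stochastic integral converges a.s.\ and in $L^2$ to $\int_0^\infty X_i\,dW$, because $\bE\big[\int_0^\infty X_i(t)^2\,dt\big]<\infty$ makes $\big(\int_0^tX_i\,dW\big)_t$ an $L^2$-bounded martingale, to which the martingale convergence theorem applies. The boundary term $\sigma X_i(T)W(T)$ is the crux of the whole lemma: by the law of the iterated logarithm $|W(T)|\le C\sqrt{T\log\log T}$ eventually a.s., so
$$
\big|X_i(T)W(T)\big|\le C\big((X_i(T))^2\,T\log\log T\big)^{1/2}\to0,
$$
which is exactly where the growth condition in the admissibility definition is consumed. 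This group therefore contributes $\sigma\int_0^\infty X_i(t)\,dW(t)$.

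For the permanent-impact group I would separate the diagonal term $j=i$ from the off-diagonal terms. The diagonal term integrates by the fundamental theorem of calculus to $\tfrac12(X_i(T)-x_i)^2\to\tfrac12x_i^2$, contributing $-\tfrac\gamma2x_i^2$. Each off-diagonal term I would integrate by parts,
$$
\int_0^T\dot X_i(t)\big(X_j(t)-x_j\big)\,dt=X_i(T)\big(X_j(T)-x_j\big)-\int_0^TX_i(t)\dot X_j(t)\,dt ;
$$
the boundary term vanishes since $X_i(T)\to0$ and $X_j$ is bounded, and the remaining integral converges absolutely by Cauchy--Schwarz, using $\int_0^\infty X_i^2\,dt<\infty$ a.s.\ (a consequence of $\bE\big[\int_0^\infty X_i^2\,dt\big]<\infty$) and $\int_0^\infty\dot X_j^2\,dt<\infty$ a.s. This yields $\gamma\sum_{j\neq i}\int_0^\infty X_i\dot X_j\,dt$. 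Finally, the temporary-impact group $-\lambda\sum_{j=1}^n\int_0^T\dot X_i\dot X_j\,dt$ converges by Cauchy--Schwarz directly from $\int_0^\infty\dot X_k^2\,dt<\infty$ a.s. Collecting the four limits produces the stated formula, and since each summand is a.s.\ finite, the full limit exists and is finite.

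The hard part will be the Brownian boundary term $\sigma X_i(T)W(T)$: controlling it is precisely the reason the admissibility condition is phrased with the factor $t\log\log t$, and the law of the iterated logarithm is the tool that matches this factor. A secondary point requiring care is the $L^2$-convergence of $\int_0^TX_i\,dW$, which rests on $\bE\big[\int_0^\infty X_i^2\,dt\big]<\infty$; everything else is integration by parts plus Cauchy--Schwarz.
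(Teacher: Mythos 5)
Your proposal is correct and follows essentially the same route as the paper: the paper also integrates by parts on $[0,T]$, then disposes of the boundary term $X_i(T)W(T)$ via the law of the iterated logarithm together with the condition $(X_i(t))^2t\log\log t\to0$, and handles the remaining integrals with the admissibility conditions and Cauchy--Schwarz. Your write-up merely spells out the details (martingale convergence for the stochastic integral, the diagonal versus off-diagonal split of the permanent-impact term) that the paper leaves implicit.
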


\begin{proof}Integrating by parts yields
\begin{align*}\lefteqn{-\int_0^T\dot X_i(t)S^{X_1,\dots,X_n}(t)\,dt}\\
&\qquad=(x_i-X_i(T))S_0-X(T)W(T)+\sigma\int_0^T X_i(t)\,dW(t)-\frac\gamma2(X_i(T)-X_i(0))^2\\
&\qquad\qquad-\gamma\sum_{j\neq i}X_i(T)(X_j(T)-X_j(0))+\gamma\sum_{j\neq i}\int_0^{T} X_i(t)\dot X_j(t)\,dt-\lambda\sum_{j=1}^n\int_0^{T}\dot X_i(t)\dot X_j(t)\,dt.
\end{align*}
The assertion now follows by using the law of the iterated logarithm for $W$,~\eqref{infinite horizon admissibility 1},~\eqref{infinite horizon admissibility 2},   and the Cauchy--Schwarz inequality.
\end{proof}

Now let $X_i\in\cX(x_i,\infty)$, $i=1,\dots, n$, be given. 
As in~\eqref{mean-variance = action functional eq},~\eqref{Lagrangian}, we get that for $Y\in\cX(y,\infty)$,
\begin{align*}
\bE[\,\cR(Y|\bm X_{-i})\,]
-\frac{\alpha_i}2\,\var(\cR(Y|\bm X_{-i}))=c+\int_0^\infty\cL^i(t,Y(t),\dot Y(t)|\bm X_{-i})\,dt,
\end{align*}
where $c=yS_0-\frac\gamma2y^2$ and the Lagrangian $\cL^i$ is given by~\eqref{Lagrangian}.
Recall that we assume $\sigma^2>0$.


\begin{lemma}\label{one-agent optimization problem lemma infinite horizon}For $i=1,\dots, n$ and $\alpha_i>0$, the functional $Y\mapsto 
\int_0^\infty\cL^i(t,Y(t),\dot Y(t)|\bm X_{-i})\,dt$ has at most one maximizer in $\cX_{\text{det}}(y,\infty)$. If, moreover, $X_1,\dots, X_n$ belong to $ C^2[0,\infty)$ and are such that
\begin{align}\label{derivatives int condition for Euler Lagrange}
\int_0^\infty\bigg|\gamma\sum_{j\neq i}\dot X_j(t)+\lambda \sum_{j\neq i}\ddot X_j(t)\bigg|\,dt<\infty,
\end{align}
 then there exists a unique maximizer   $Y^*\in\cX_{\text{det}}(y,\infty)\cap C^2[0,\infty)$, which is given as the unique solution of the boundary value problem
\begin{equation}\label{coupled Euler-Lagrange inf horizon}
 {\alpha_i}\sigma^2Y(t)
-2\lambda\ddot Y(t)=\gamma\sum_{j\neq i}\dot X_j(t)+\lambda \sum_{j\neq i}\ddot X_j(t),\qquad
Y(0)=y,\ \lim_{t\ua\infty}Y(t)=0.
 \end{equation}
 Moreover, $Y$ satisfies $\int_0^\infty|\dot Y(t)|+|\ddot Y(t)|\,dt<\infty$.
\end{lemma}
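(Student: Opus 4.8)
The plan is to follow the structure of the finite-horizon Lemma~\ref{one-agent optimization problem lemma}, adapting each step to the noncompact interval $[0,\infty)$ where the terminal constraint is replaced by the decay condition $\lim_{t\ua\infty}Y(t)=0$. First I would establish uniqueness of the maximizer: the Lagrangian $\cL^i(t,q,p|\bm X_{-i})$ from~\eqref{Lagrangian} is strictly concave in $(q,p)$ (the quadratic part is $-\frac{\alpha_i\sigma^2}2q^2-\lambda p^2$, which is negative definite because $\alpha_i>0$ and $\lambda>0$), and $\cX_{\text{det}}(y,\infty)$ is convex, so the functional $Y\mapsto\int_0^\infty\cL^i\,dt$ is strictly concave and admits at most one maximizer. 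This part requires no extra integrability assumption.

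For existence, I would work under the additional hypotheses $X_1,\dots,X_n\in C^2[0,\infty)$ and~\eqref{derivatives int condition for Euler Lagrange}. Write $u(t):=\gamma\sum_{j\neq i}\dot X_j(t)+\lambda\sum_{j\neq i}\ddot X_j(t)$ for the right-hand side forcing term, which by~\eqref{derivatives int condition for Euler Lagrange} satisfies $\int_0^\infty|u(t)|\,dt<\infty$. The homogeneous equation $\alpha_i\sigma^2Y-2\lambda\ddot Y=0$ has characteristic roots $\pm\kappa_i$ with $\kappa_i=\sqrt{\alpha_i\sigma^2/2\lambda}>0$, so the general solution of~\eqref{coupled Euler-Lagrange inf horizon} is a combination of $e^{\pm\kappa_i t}$ plus a particular solution built from the variation-of-constants kernel. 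The natural candidate is
\begin{align*}
Y^*(t)=c\,e^{-\kappa_i t}+\frac1{4\lambda\kappa_i}\int_t^\infty e^{-\kappa_i(s-t)}u(s)\,ds-\frac1{4\lambda\kappa_i}\int_0^t e^{-\kappa_i(t-s)}u(s)\,ds,
\end{align*}
where the coefficient of $e^{+\kappa_i t}$ has been suppressed and the particular integrals are arranged so that each decays as $t\ua\infty$. The integrability of $u$ guarantees that both integrals are finite and tend to $0$ at infinity, so $Y^*(t)\to0$; the free constant $c$ is then fixed uniquely by $Y^*(0)=y$. The decay of $\dot Y^*$ and $\ddot Y^*$, together with the stated conclusion $\int_0^\infty(|\dot Y|+|\ddot Y|)\,dt<\infty$, would follow by differentiating this representation and again invoking $\int_0^\infty|u|\,dt<\infty$; this also confirms $Y^*\in\cX_{\text{det}}(y,\infty)$ via the admissibility conditions~\eqref{infinite horizon admissibility 1}--\eqref{infinite horizon admissibility 2}.

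Finally I would verify that $Y^*$ is the maximizer by the same concavity-plus-Euler--Lagrange argument as in the finite-horizon case: for arbitrary $Y\in\cX_{\text{det}}(y,\infty)$, concavity gives a lower bound for $\cL^i(t,Y^*,\dot Y^*)-\cL^i(t,Y,\dot Y)$ equal to a total derivative $\frac d{dt}\big(\cL^i_p(t,Y^*,\dot Y^*)(Y^*-Y)\big)$, and integrating yields a boundary term $\big[\cL^i_p(t,Y^*,\dot Y^*)(Y^*(t)-Y(t))\big]_0^\infty$. The main obstacle is justifying that this boundary term vanishes at $t=\infty$: at $t=0$ it is zero since $Y^*(0)=Y(0)=y$, but at infinity one must control $\cL^i_p(t,Y^*,\dot Y^*)(Y^*-Y)=-\lambda\big(\sum_{j\neq i}\dot X_j+2\dot Y^*\big)(Y^*-Y)$. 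Here the decay of $\dot Y^*$, the decay of $Y^*$, and the admissibility-forced decay of $Y$ (both lie in $\cX_{\text{det}}(y,\infty)$, so $\int_0^\infty Y^2\,dt<\infty$ and $Y$ is bounded with $Y(t)^2t\log\log t\to0$) must be combined, possibly along a subsequence $t_k\ua\infty$, to force the product to $0$; controlling the cross term involving $\sum_{j\neq i}\dot X_j$ is where condition~\eqref{derivatives int condition for Euler Lagrange} and the $L^2$-bounds on the competitors' strategies enter. Once this vanishing is secured, $\int_0^\infty\cL^i(t,Y^*,\dot Y^*)\,dt\ge\int_0^\infty\cL^i(t,Y,\dot Y)\,dt$, completing the proof.
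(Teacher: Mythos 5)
Your overall route coincides with the paper's: the same strict-concavity argument for uniqueness, the same construction of a decaying solution of the Euler--Lagrange equation by choosing the coefficient of $e^{\kappa_i t}$ so as to cancel the growing branch of the variation-of-constants integral, and the same concavity-plus-total-derivative verification of optimality (for this last step the paper simply refers back to the finite-horizon Lemma~\ref{one-agent optimization problem lemma}; your discussion of the boundary term at $t=\infty$, including the use of Cauchy--Schwarz and a subsequence, is if anything more explicit than what the paper writes down).

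There is, however, one concrete error: your candidate
\begin{equation*}
Y^*(t)=c\,e^{-\kappa_i t}+\frac1{4\lambda\kappa_i}\int_t^\infty e^{-\kappa_i(s-t)}u(s)\,ds-\frac1{4\lambda\kappa_i}\int_0^t e^{-\kappa_i(t-s)}u(s)\,ds
\end{equation*}
does not solve~\eqref{coupled Euler-Lagrange inf horizon}; the sign of the last integral must be $+$. Indeed, setting $A(t)=\frac1{4\lambda\kappa_i}\int_t^\infty e^{-\kappa_i(s-t)}u(s)\,ds$ and $B(t)=\frac1{4\lambda\kappa_i}\int_0^t e^{-\kappa_i(t-s)}u(s)\,ds$, one has $\dot A=\kappa_i A-\frac{u}{4\lambda\kappa_i}$ and $\dot B=-\kappa_i B+\frac{u}{4\lambda\kappa_i}$, whence
\begin{equation*}
\frac{d^2}{dt^2}(A+B)-\kappa_i^2(A+B)=-\frac{u}{2\lambda},\qquad
\frac{d^2}{dt^2}(A-B)-\kappa_i^2(A-B)=-\frac{\dot u}{2\lambda\kappa_i};
\end{equation*}
since the equation $\alpha_i\sigma^2Y-2\lambda\ddot Y=u$ is exactly $\ddot Y-\kappa_i^2Y=-u/(2\lambda)$, the correct decaying particular solution is $A+B$, not $A-B$. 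With the plus sign your formula becomes precisely the paper's solution: the paper keeps the form $c_1e^{-\kappa_it}+c_2e^{\kappa_it}-\frac1{4\lambda\kappa_i}\int_0^te^{\kappa_i(t-s)}u\,ds+\frac1{4\lambda\kappa_i}\int_0^te^{-\kappa_i(t-s)}u\,ds$ and chooses $c_2=\frac1{4\lambda\kappa_i}\int_0^\infty e^{-\kappa_is}u(s)\,ds$, which turns the two $e^{\kappa_i t}$-terms into your $A$ while leaving $+B$ untouched. Once this sign is fixed, the remaining steps --- decay of $Y^*,\dot Y^*,\ddot Y^*$, the $L^1$-estimates via $\int_0^\infty|u|\,dt<\infty$, membership in $\cX_{\text{det}}(y,\infty)$, and the optimality verification --- go through as you describe and as in the paper.
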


\begin{proof}
It follows from the strict concavity of the Lagrangian $\cL^i$, the convexity of  the set $\cX_{\text{det}}(y,\infty)$, and the finiteness of the integral  $\int_0^\infty\cL^i(t,Y(t),\dot Y(t)|\bm X_{-i})\,dt$  that there can be at most one maximizer in $\cX_{\text{det}}(y,\infty)$. 

Now, we show the existence of a maximizer under the additional assumptions $X_1,\dots, X_n\in C^2[0,\infty)$ and~\eqref{derivatives int condition for Euler Lagrange}.  As noted in the proof of Lemma~\ref{one-agent optimization problem lemma}, the general solution of the Euler--Lagrange equation~\eqref{first Euler Lagrange eqn} is given by
\begin{equation}\label{ODE solution form rep infinite horizon}
 Y(t)=c_1 e^{-\kappa_i t}+c_2e^{\kappa_i t}-\frac1{4\lambda\kappa_i}\int_0^te^{\kappa_i(t-s)}u(s)\,ds+\frac1{4\lambda\kappa_i}\int_0^te^{-\kappa_i(t-s)}u(s)\,ds,
\end{equation}
where $u(t)=\gamma\sum_{j\neq i}\dot X_j(t)+\lambda \sum_{j\neq i}\ddot X_j(t)$, $c_1$ and $c_2$ are constants, and $
 \kappa_i=\sqrt{{\alpha_i\sigma^2}/{2\lambda}}>0$. One checks that~\eqref{derivatives int condition for Euler Lagrange}
 implies that $\int_0^te^{-\kappa_i(t-s)}u(s)\,ds\to0$ as $t\ua\infty$. Therefore, when letting
 \begin{align}\label{c2 eq}
 c_2:=\frac1{4\lambda\kappa_i}\int_0^\infty e^{-\kappa_is}u(s)\,ds
\end{align}
  and $c_1:=y-c_2$, one sees that the corresponding function $Y^*$ solves~\eqref{coupled Euler-Lagrange inf horizon}.
  
 Next, by~\eqref{coupled Euler-Lagrange inf horizon},~\eqref{ODE solution form rep infinite horizon}, and~\eqref{c2 eq}, $Y^*(t)$, $\dot Y^*(t)$, and $\ddot Y^*(t)$  are linear combinations of the following functions:
  $$u(t),\quad e^{-\kappa_i t},\quad \int_0^te^{-\kappa_i(t-s)}u(s)\,ds, \quad \int_t^\infty e^{\kappa_i(t-s)}u(s)\,ds.
  $$
 We have
  \begin{align*}\int_0^T\int_0^te^{-\kappa_i(t-s)}|u(s)|\,ds\,dt=\frac1{\kappa_i}\int_0^T|u(s)|\,ds-\frac1{\kappa_i}\int_0^Te^{-\kappa_i(T-s)}|u(s)|\,ds
\end{align*}
and
$$\int_0^T\int_t^\infty e^{\kappa_i(t-s)}|u(s)|\,ds\,dt=\frac1{\kappa_i}\int_0^\infty(e^{\kappa_i(s\wedge T-s)}-1)|u(s)|\,ds,
$$
which by~\eqref{derivatives int condition for Euler Lagrange}
 both converge to  finite limits for $T\ua\infty$. It thus follows   that $\int_0^\infty|\dot Y^*(t)|+|\ddot Y^*(t)|\,dt<\infty$. In the same way, we get  $\int_0^\infty|  Y^*(t)|\,dt<\infty$. Adding the facts that $Y^*$ is continuous and tends to zero as $t\ua\infty$, we obtain $\int_0^\infty(Y^*(t))^2\,dt<\infty$ and in turn  $Y^*\in\cX_{\text{det}}(y,\infty)\cap C^2[0,\infty)$. The optimality of $Y^*$ follows as in the second part of the proof of Lemma~\ref{one-agent optimization problem lemma}.
 \end{proof}
 

 \begin{proof}[Proof of Theorem~\ref{Nash eq exist thm infinite horizon}] One first shows just as in Lemma~\ref{Equilibrium uniqueness lemma} that there can be at most one Nash equilibrium for mean-variance optimization. Moreover, one shows as in the proof of Corollary~\ref{CARA Nash Eq exist cor} that a Nash equilibrium for mean-variance optimization is also a Nash equilibrium for CARA utility maximization.
 
 Now, we turn to the proof of existence of a Nash equilibrium for given initial values $x_1,\dots, x_n\in\bR$.  Let $M$ be the $2n\times 2n$-matrix defined in Remark~\ref{matrix rm}. As observed in the proof of Lemma~\ref{matrix M lemma}, any eigenvector of $M$ with eigenvalue $\tau$ must be of the form ${\bm v\choose\tau\bm v}$ for some $\bm v\in\bR^n$. We will show below that in both cases,~\ref{Nash eq exist thm infinite horizon (a)} and~\ref{Nash eq exist thm infinite horizon (b)},  there exists a basis $\bm v_1,\dots, \bm v_n$ of $\bR^n$ and  numbers $\tau_1,\dots,\tau_n<0$ (which are not necessarily distinct) such that ${\bm v_1\choose\tau_1\bm v_1},\dots, {\bm v_n\choose\tau_n\bm v_n}$  are eigenvectors of $M$. Taking this fact as given, let $c_1,\dots,c_n\in\bR$ be such that $c_1\bm v_1+\cdots+c_n\bm v_n=(x_1,\dots,x_n)^\top$ and define
 $$\bm Z(0):=c_1{\bm v_1\choose\tau_1\bm v_1}+\cdots+c_n {\bm v_n\choose\tau_n\bm v_n}\qquad\text{and}\qquad\bm Z(t):=e^{tM}\bm Z(0).
 $$
 We denote by $\bm X^*(t)$ the first $n$ components of $\bm Z(t)$. 
 As observed in the proof of Theorem~\ref{Nash Eq exist thm} and Remark~\ref{matrix rm}, $\bm X^*(t)$ will solve the system~\eqref{ode system 1} of coupled Euler--Lagrange equations, which by Lemma~\ref{one-agent optimization problem lemma infinite horizon} is sufficient for optimality in the infinite-horizon setting, provided that the components correspond to admissible strategies and satisfy the integrability conditions of Lemma~\ref{one-agent optimization problem lemma infinite horizon}. But each component of $\bm X^*(t)$ is by construction a linear combination of the decreasing exponential functions $e^{\tau_1t},\dots, e^{\tau_nt}$, and so these conditions are clearly satisfied.
 
 Now, we consider  case~\ref{Nash eq exist thm infinite horizon (a)}. Then $\theta_-$ and $\rho_-$ defined in~\eqref{thetarho+-} are strictly negative, and so the required existence of $\bm v_1,\dots,\bm v_n$ follows from Lemma~\ref{matrix M lemma}. It follows from the preceding part of the proof that each component of $\bm X^*(t)$ can be written as 
 $$X^*_i(t)=c_i(\theta_-)e^{\theta_-t}+c(\rho_-)e^{\rho_-t}.
 $$
 Letting again $\Sigma(t):=\sum_{j=1}^nX^*_j(t)$ and 
 arguing as in the proof of Theorem~\ref{alpha n=alpha b=0 thm} yields first that $\Sigma(t)=\sum_{i=1}^nx_ie^{\rho_-t}$ and then that
$$
c(\rho_-)=\frac1n\sum_{j=1}^nx_j\quad\text{and}\quad c_i(\theta_-)=x_i-c(\rho_-).
$$
 This establishes~\eqref{Nash eq exist thm infinite horizon strategy}
 and completes the proof of Theorem~\ref{Nash eq exist thm infinite horizon} under assumption~\ref{Nash eq exist thm infinite horizon (a)}.
 
Now we turn toward case~\ref{Nash eq exist thm infinite horizon (b)}. We may assume without loss of generality that $\sigma=1$.
The characteristic polynomial of  the matrix $M$ of the system~\eqref{ODE formula} for $n=2$ is
$$
\chi(\tau):=\tau^4-\frac{2\gamma}{3\lambda}\tau^3-\frac{\gamma^2+2\lambda(\alpha_1+\alpha_2)}{3\lambda^2}\tau^2+\frac{\alpha_1\alpha_2}{3\lambda^2}.$$
Its derivative, $\chi'$,  has three distinct roots, $t_0,t_+,t_-$, which are  given by
$$
t_0=0,\quad t_{\pm}=\frac{3 \gamma \pm \sqrt{33 \gamma ^2+48 \left(\alpha _1+\alpha _2\right)\lambda  }}{12 \lambda }.
$$
Note first that $t_0$ is a strictly positive local maximum of $\chi$ because
$$
\chi(t_0)=\frac{\alpha_1\alpha_2}{3\lambda^2}>0,\quad \chi''(t_0)=-\frac{2 \left(2 \alpha _1 \lambda  +2 \alpha _2 \lambda  +\gamma ^2\right)}{3 \lambda ^2}<0.
$$
Next, $t_+>0$, $t_-<0$, and
\begin{align*}
\chi(t_-)&=\frac1{864 \lambda ^5}
\Big(-96 \lambda ^3 \left(\alpha_1^2-\alpha_1
   \alpha_2+\alpha_2^2\right)-168 \gamma ^2 \lambda ^2
   (\alpha_1+\alpha_2)-69 \gamma
   ^4 \lambda\\
   &\qquad+16 \gamma  \lambda  (\alpha_1+\alpha_2) \sqrt{48
   \lambda ^3 (\alpha_1+\alpha_2)+33 \gamma ^2 \lambda ^2}+11 \gamma ^3
   \sqrt{48 \lambda ^3 (\alpha_1+\alpha_2)+33 \gamma ^2 \lambda ^2}\Big).
   \end{align*}
   If we can show that $\chi(t_-)<0$ then $\chi$ will have precisely two distinct strictly negative roots, due to the intermediate value theorem. 
 It is, however, not easy to determine by direct inspection of our preceding formula whether indeed $\chi(t_-)<0$. But, we already know that for $\alpha_1=\alpha_2$ the matrix $M$ has exactly two strictly negative (though not necessarily distinct) eigenvalues, $\rho_-$ and $\theta_-$. So in this case, both eigenvalues must be strictly negative roots of $\chi$. We moreover know that $\chi(0)>0$, $\lim_{\tau\da-\infty}\chi(\tau)=+\infty$, and that $t_-$ is the only strictly negative critical point of $\chi$. It follows that we must have $\chi(t_-)\le0$  when $\alpha_1=\alpha_2$. Now suppose that $\alpha_1\neq\alpha_2$ and let $\bbar\alpha:=\frac12(\alpha_1+\alpha_2)$. Then $\alpha_1+\alpha_2=\bbar\alpha+\bbar\alpha$ and
 $$\alpha_1^2-\alpha_1
   \alpha_2+\alpha_2^2-\bbar\alpha^2=\frac34(\alpha_1-\alpha_2)^2>0
 $$
It therefore follows that $\chi(t_-)<\bbar\chi(t_-)$, where $\bbar\chi$ denotes the characteristic polynomial of $M$ when both $\alpha_1$ and $\alpha_2$ have been substituted by $\bbar\alpha$. As the formula for $t_-$ is   invariant under this substitution, we must have $\bbar\chi(t_-)\le0$ according to what has been said before, and so we arrive at $\chi(t_-)<0$.

It follows from the preceding paragraph that $M$ has two distinct strictly negative eigenvalues $\tau_1$ and $\tau_2$. Hence, there exist corresponding eigenvectors of the form ${\bm v_1\choose\tau_1\bm v_1}$ and ${\bm v_2\choose\tau_2\bm v_2}$. But, we still need to exclude the possibility that $\bm v_1$ and $\bm v_2$ are linearly dependent to complete the proof. To this end, note that it follows from~\eqref{matrix M} that we must have 
\begin{align}\label{vector quadratic eqn}
\frac1\lambda\Big(A-\frac1{n+1}JA\Big)\bm w+\tau\frac\gamma\lambda\Big(I-\frac2{n+1}J\Big)\bm w=\tau^2\bm w
\end{align}
for ${\bm w\choose\tau\bm w}$ to be an eigenvector of $M$ with eigenvalue $\tau$. 

Let us first suppose that the components $w_1$ and $w_2$ of $\bm w$ do not add up to zero: $w_1+w_2\neq0$. Then, taking the inner product of the vector equation~\eqref{vector quadratic eqn}
 with the vector ${1\choose1}$ yields the equation
$$\alpha_1w_1+\alpha_2w_2-\tau\gamma(w_1+w_2)=3\tau^2\lambda(w_1+w_2).
$$
This quadratic equation in $\tau$ has the two possible roots
$$\tau_\pm=\frac{-\gamma\pm\sqrt{\gamma^2+12\lambda\frac{\alpha_1w_1+\alpha_2w_2}{w_1+w_2}}}{6\lambda},
$$
one of which must be equal to $\tau$. As $\tau_-<0<\tau_+$ it follows that ${\bm w\choose\wt\tau\bm w}$ cannot be an eigenvector of $M$ for any $\wt\tau$ that is different from $\tau$ and has the same sign as $\tau$.

Let us now consider the case in which $w_1=-w_2$. Taking the inner product of the equation~\eqref{vector quadratic eqn} with the vector ${-1\choose1}$ and using the requirement $w_1,w_2\neq0$ yields the  equation $\alpha_2+\alpha_1+2\tau\gamma =2\tau^2\lambda $, which is independent of $w_1$ and $w_2$.
It has the roots
$$\frac{\gamma\pm\sqrt{\gamma^2+4\lambda(\alpha_1+\alpha_2)}}{2\lambda},
$$
which again have different signs. We thus conclude as in the case $w_1+w_2\neq0$.
\end{proof}


\begin{proof}[Proof of Corollary~\ref{predatory cor}] It  follows from~\eqref{Nash eq exist thm infinite horizon strategy}
 that  $X_i^*(t)$ has the same sign as  
\begin{align*}
\rho_--\theta_-=\frac{\gamma}{2\lambda}\bigg(\frac{-2n}{n+1}+\sqrt{1+\xi}-\sqrt{\Big(\frac{n-1}{n+1}\Big)^2+\frac\xi{n+1}}\bigg),
\end{align*} 
 where $\xi=4\alpha\sigma^2\lambda/\gamma^2$. The right-hand side is a strictly increasing function of $\xi$ and vanishes for $\xi=8$.
\end{proof}

\bibliography{MarketImpact}{}
\bibliographystyle{agsm}

\end{document}